\newtheorem{theorem}{Theorem}
\newtheorem{lemma}{Lemma}
\newtheorem{corollary}{Corollary}
\theoremstyle{remark}
\newtheorem{remark}{Remark}
\newcommand{\te}{Teich\-m\"ul\-ler}
\newcommand{\h}{\mathbb{H}}
\newcommand{\z}{\mathbb{Z}}
\newcommand{\re}{\widehat{\mathbb{R}}}
\newcommand{\ud}{\mathbb{D}}
\newcommand{\uc}{\mathbb{S}^{1}}
\newcommand{\f}{\mathcal{F}}
\newcommand{\x}{\mathcal{X}}
\newcommand{\lam}{\mathcal L}
\numberwithin{equation}{section}
\begin{document}

\title[characterization of asymptotic Teichm\"uller space through shears]{Characterization of the asymptotic Teichm\"uller space of
the open unit disk through shears}

\author{Jinhua  Fan$^*$}
\address{Department of Applied Mathematics\\
Nanjing University of Science and Technology\\
Nanjing 210094, PRC} \email[]{jinhuafan@hotmail.com}

\author{Jun Hu$^{**}$}
\address{Department of Mathematics\\
Brooklyn College of CUNY\\
Brooklyn, NY 11210\\
and\\
Ph.D. Program in Mathematics\\
Graduate Center of CUNY\\
365 Fifth Avenue, New York, NY 10016}
\email[]{junhu@brooklyn.cuny.edu or JHu1@gc.cuny.edu}

\thanks{$^*$This work is supported by NNSF of China (No.
11201228). $^{**}$The work is partially supported by PSC-CUNY grants
and Brooklyn College Provost's Office for reassigned time in Spring
2013.}

\subjclass[2000]{30C75, 30F60}


\keywords{ \te\  space, asymptotic \te \ space, extremal maximal
dilatation, maximal quadrilateral dilatation, shear}

\begin{abstract} We give a parametrization to the
asymptotic \te\ space $AT(\ud)$ of the open unit disk $\ud $
through equivalent classes of shear functions induced by
quasisymmetric homeomorphisms on the Farey tesselation of $\ud $.
Then using the parametrization, we define a new metric on $AT(\ud
)$. Two other related metrics are also introduced on $AT(\ud)$ by
using cross-ratio distortions or quadrilateral dilatations under
the boundary maps on degenerating sequences of quadruples or
quadrilaterals. We show that the topologies induced by the three
metrics are equivalent to the one induced by the \te\ metric on
$AT(\ud )$. Before proving our main results, we revisit and
rectify a mistake in the proof in \cite{Saric2} on the
characterization of quasisymmetric homeomorphisms in terms of
shear functions.
\end{abstract}

\maketitle

\section{Introduction}
Let $\ud$ be the open unit disk in the complex plane and centered
at the origin, and let $\uc$ be the boundary of $\ud $. Denote by
$T(\ud )$ the universal \te\ space and $AT(\ud )$ the asymptotic
\te\ space of $\ud $. Recently, two characterizations of the \te\
topology on $T(\ud )$ are given in \cite{MiyachiSaric} and
\cite{Saric2}: one uses a uniform weak$^*$ topology defined on the
Thurston parametrization of $T(\ud )$ (\cite{Thurston} or see
elaborations and further investigations in \cite{GardinerHuLakic},
\cite{Hu}, and \cite{Saric1}) - the space of all Thurston-bounded
measured geodesic laminations $\lam$ on $\ud $; the other is
comprised of a parametrization of $T(\ud )$ by shear functions $s$
induced by quasisymmetric homeomorphisms of $\uc $ on the Farey
tesselation of $\ud$ and a metric defined on the parametrization.
In the following, by a topological characterization of $AT(\ud )$
we mean a parametrization of $AT(\ud )$ and a topology or metric
on the parametrization equivalent to or with the induced topology
equivalent to the \te\ topology on $AT(\ud )$. Very recently, a
topological characterization of $AT(\ud )$ is given in
\cite{FanHu} by using an asymptotically uniform weak$^*$ topology
on a parametrization of $AT(\ud )$ by equivalent classes of
laminations $\lam$ in the Thurston parametrization of $T(\ud )$.
In this paper, we give a topological characterization of $AT(\ud
)$ by introducing a metric on a parametrization of $AT(\ud )$ by
equivalent classes of the shear functions $s$ representing $T(\ud
)$.

There are different models for $T(\ud )$ and $AT(\ud )$. In this
paper, the main model we use for $T(\ud )$ is the collection
$QS(\uc )$ of all quasisymmetric homeomorphisms of $\uc$ fixing
three points $1,-1$ and $i$; correspondingly, the one we use for
$AT(\ud)$ is the quotient space $S(\uc )\backslash QS(\uc )$,
where $S(\uc )$ is the collection of all symmetric homeomorphisms
in $QS(\uc )$. Before we give the statements of our theorems, we
first recall some background.

Consider $\ud $ as the hyperbolic plane. The Farey tesselation
$\mathcal{F}$ of $\ud $ is a locally finite idea triangulation of
$\ud $ that is invariant under the group of the isometries of $\ud
$ generated by the reflections in the hyperbolic metric with
respect to the geodesics of $\mathcal{F}$ (see Section 4 for the
details). The endpoints of the geodesics in $\mathcal{F}$ are
called the tips of $\mathcal{F}$ and we denote the collection of
all tips by $\mathcal{P}$. Each homeomorphism $h$ of $\uc $
induces a real-valued function $s_h:\mathcal{F}\rightarrow
\mathbb{R}$ as follows, which is called the {\em shear function}
or {\em coordinate} of $h$. Given each edge $e\in \f$, let
$(\Delta , \Delta _1)$ be the pair of the two adjacent triangles
of $\f$ sharing a common boundary at $e$. Then $s_h(e)$ is defined
to be the shear of the image pair $(h(\Delta ), h(\Delta_1))$,
that is the signed hyperbolic distance between the orthogonal
projections of the third vertices of $h(\Delta )$ and $h(\Delta
_1)$ to their common edge (see Section 4 for its interpretation
through the cross-ratio distortion under $h$ on a quadruple of
four points on $\uc$).

Let $\{e_n^p\}_{n\in \z}$ be a fan in $\f$ with tip $p$ (see
Section 4 for the definition), and let $s_n^p=s_h(e_n^p)$. Define
\begin{equation}\label{defofs-pmk}
s(p;m,k)=
e^{s_m^p}\frac{1+e^{s_{m+1}^p}+e^{(s_{m+1}^p+s_{m+2}^p)}+\cdots
+e^{(s_{m+1}^p+s_{m+2}^p+\cdots+s_{m+k}^p)}}
{1+e^{-s_{m-1}^p}+e^{-(s_{m-1}^p+s_{m-2}^p)}+\cdots
+e^{-(s_{m-1}^p+s_{m-2}^p+\cdots+s_{m-k}^p)}}
\end{equation}
for any $m, k\in \z$. This quantity can be interpreted as the
cross-ratio distortion of $h$ on a quadruple of four points on
$\uc $ (see Section 4). Then define
$$M_s(p)=\sup _{m, k\in \z}| \log s(p;m, k)| \;\;{\rm and }\;\; M_s=\sup _{p\in \mathcal{P}}M_s(p).$$
We call $M_s$ the {\em shear norm} of $h$. We say that a circle
homeomorphism $h$ of $\uc $ has a {\em finite shear norm} if
 $M_s$ is finite.

It is stated in Theorem A of \cite{Saric2} that a real-valued
function $s:\f\rightarrow \mathbb{R}$ is equal to the shear
function $s_h$ of a quasisymmetric homeomorphism $h$ of $\uc $ if
and only if $s$ has a finite shear norm. Let $\x$ be the
collection of all real-valued functions $s:\f \rightarrow
\mathbb{R}$ with finite shear norms. Then $\x$ gives a
parametrization of $T(\ud )$. Given two elements $s_1, s_2 \in
\x$, define
$$M_{s_1, s_2}(p)=\sup _{m, k\in \z}|\log\frac{s_1(p;m,k)}{s_2(p;m,k)}| \;\;{\rm and }\;\;d_S(s_1, s_2)=\sup _{p\in \mathcal{P}}M_{s_1, s_2}(p).$$
Then $d_S$ defines a metric on $\x$, called the {\em shear
metric}. In Theorem B of \cite{Saric2}, \v{S}ari\'c showed that
the parametrization $\x$ of $T(\ud )$ equipped with the metric
$d_S$ is topologically equivalent to $T(\ud )$ under the \te\
metric $d_T$. More explicitly, he proved that a sequence of points
converges to a point in $T(\ud )$ under $d_T$ if and only if the
sequence of the corresponding shear functions converges to the
shear function of the limiting point under $d_S$.

The first goal of this paper is to give a parametrization to the
asymptotic \te\ space $AT(\ud)$ via equivalent classes of shear
functions, define a new metric on the parametrization, and show
that the parametrization with the metric is a topological
characterization of $AT(\ud )$ under the \te\ metric $d_{AT}$.

We first introduce a pseudo metric $d_{AS}$ on $\mathcal{X}$
(defined at the beginning of Section 5). This pseudo metric
introduces an equivalent relation on $\mathcal{X}$ when two
elements $s$ and $s^\prime $ of $\mathcal{X}$ are defined to be
equivalent if $d_{AS}(s, s^\prime )=0$. We denote by $[s]$ the
equivalent class of $s$ and by $\mathcal{AX}$ the quotient space
under this equivalent relation. Then $d_{AS}$ induces a metric on
$\mathcal{AX}$, which is continued to be denoted by $d_{AS}$ and
called the {\em asymptotic shear metric} on $\mathcal{AX}$. In the
following two theorems, we show that $\mathcal{AX}$ is a
parametrization of $AT(\ud )$ and $(\mathcal{AX}, d_{AS})$ is
topologically equivalent to $(AT(\ud ), d_{AT})$, where $d_{AT}$
is the \te \ metric on $AT(\ud )$.

\begin{theorem} \label{charactrization-via-shears}
Let $h,h^\prime\in T(\ud)$ and let $s$ and $s^\prime$ be the
shear functions induced by $h$ and $h^\prime$ respectively. Then
$[h]=[h^\prime]$ if and only if $d_{AS}(s,s^\prime)=0$, where
$[h]$ denote a point in the asymptotic \te\ space $AT(\ud).$
\end{theorem}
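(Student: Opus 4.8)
The plan is to reduce the statement to a characterization of symmetric homeomorphisms in terms of the asymptotic vanishing of cross-ratio distortions, and then to transport that characterization through the shear parametrization. First I recall that in the model $S(\uc)\backslash QS(\uc)$ the equality $[h]=[h']$ means precisely that the quasisymmetric homeomorphism $g:=h'\circ h^{-1}$ is symmetric. So the theorem becomes: $g=h'\circ h^{-1}$ is symmetric if and only if $d_{AS}(s,s')=0$.

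Next I would use the cross-ratio interpretation of $s(p;m,k)$ from Section 4. Writing $Q=Q(p;m,k)$ for the quadruple of four tips attached to the data $(p;m,k)$ and $\mathrm{cr}$ for the cross-ratio, one has $s(p;m,k)=\mathrm{cr}(h(Q))/\mathrm{cr}(Q)$ and $s'(p;m,k)=\mathrm{cr}(h'(Q))/\mathrm{cr}(Q)$. Hence
$$\frac{s'(p;m,k)}{s(p;m,k)}=\frac{\mathrm{cr}(h'(Q))}{\mathrm{cr}(h(Q))}=\frac{\mathrm{cr}\big(g(h(Q))\big)}{\mathrm{cr}(h(Q))},$$
so that $\log\big(s'(p;m,k)/s(p;m,k)\big)$ is exactly the logarithmic cross-ratio distortion of $g$ on the image quadruple $\widetilde Q:=h(Q)$. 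Recalling that $d_{AS}$ is obtained from $d_S$ by retaining only the behaviour of the distortions on edges $e^p_m$ that leave every compact subset of $\ud$ (an infimum over compacta of the supremum of the distortions over edges disjoint from them), this identity shows that $d_{AS}(s,s')=0$ is equivalent to the statement that the cross-ratio distortion of $g$ on the family $\{\widetilde Q=h(Q(p;m,k))\}$ tends to $1$ as the defining edge degenerates.

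I would then relate this to symmetry of $g$. A degenerating edge $e^p_m$ is one whose endpoints coalesce on $\uc$; since $h$ is a homeomorphism of $\uc$, the quadruple $Q$ shrinks to a point of $\uc$ if and only if $\widetilde Q=h(Q)$ does, while the finite shear norm of $h$ (Theorem A of \cite{Saric2}) keeps the shape/modulus of $\widetilde Q$ in a fixed compact range, so $\widetilde Q$ remains a genuine, non-collapsing quadruple. The characterization of symmetric homeomorphisms asserts that $g$ is symmetric precisely when its cross-ratio distortion tends to $1$ on every family of quadruples degenerating to the boundary with bounded modulus. The forward implication (if $g$ is symmetric then the distortion vanishes on the particular family $\{\widetilde Q\}$) is immediate from this characterization; combined with the identity above it gives "$[h]=[h']\Rightarrow d_{AS}(s,s')=0$."

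The reverse implication is the main obstacle. Here I only know that the distortion of $g$ vanishes on the combinatorially special family $\{h(Q(p;m,k))\}$, and I must upgrade this to vanishing on all degenerating quadruples of bounded modulus. The key step is a density/comparison argument: every quadruple $\widehat Q$ that degenerates to a point $\zeta\in\uc$ with modulus in a prescribed range can be trapped between nearby image quadruples $h(Q(p;m,k))$, with cross-ratios comparable up to constants furnished by the quasisymmetry of $g$ and the finite shear norm of $h$. Estimating $\mathrm{cr}(g(\widehat Q))/\mathrm{cr}(\widehat Q)$ by the distortions on these neighbours then forces it to $1$, yielding symmetry of $g$. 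This is exactly the place where the corrected characterization of shears (the rectification of \cite{Saric2} announced in the introduction) is needed, since the naive comparison is where the original gap occurs; carrying out the trapping uniformly in $\zeta$ and in the modulus is the technical heart of the argument.
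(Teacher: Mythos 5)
Your forward implication is correct and is essentially the paper's own argument: the identity $s'(p;m,k)/s(p;m,k)=cr(h'(Q))/cr(h(Q))$ on the Farey quadruples $Q=\{p,a^p_{m-k},a^p_m,a^p_{m+k}\}$ (which satisfy $cr(Q)=1$), combined with the fact that asymptotically trivial classes have vanishing asymptotic cross-ratio distortion, is exactly Remark 2 of Section 5 (the inequality $d_{AS}\le d_{AC}$) together with Theorem 7. The genuine gap is in your reverse implication. You propose to upgrade vanishing of the distortion of $g=h'\circ h^{-1}$ on the special family $\{h(Q(p;m,k))\}$ to vanishing on all degenerating quadruples by trapping an arbitrary $\widehat Q$ between nearby Farey image quadruples whose cross-ratios are comparable up to constants coming from the quasisymmetry of $g$ and the finite shear norm of $h$, and you claim this forces $cr(g(\widehat Q))/cr(\widehat Q)\to 1$. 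That inference is invalid: comparability with fixed multiplicative constants transfers only \emph{boundedness} of distortion, never asymptotic vanishing. Knowing the distortion of $g$ is $1+o(1)$ on quadruples comparable to $\widehat Q$ within a factor $C=C(K(g),M_s)>1$ yields only that the distortion on $\widehat Q$ lies in a fixed compact interval depending on $C$; to squeeze it to $1$ you would need the comparison constants themselves to tend to $1$, i.e.\ you would need $g$ to be symmetric, which is what you are trying to prove. This is exactly the kind of unjustified propagation that the counter-example in Section 3 (to Lemma 2.2 of \cite{Saric2}) warns against: control of cross-ratio distortion on a special family of configurations does not spread to nearby configurations without an additional mechanism.

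The missing idea is that the paper never upgrades the distortion estimate quadruple-by-quadruple; it converts the hypothesis $d_{AS}(s,s')=0$ into a statement about Douady-Earle extensions via normal families. Assuming $[h]\ne[h']$, the result of \cite{EarleMarkovicSaric} gives $c>0$ and points $z_n$ leaving every compact subset of $\h$ with $|\mu_{ex(\widetilde h)}(z_n)-\mu_{ex(\widetilde h')}(z_n)|\ge c$. One maps the Farey triangle containing $z_n$ back to $\triangle_0$ by $A_n\in PSL(2,\mathbb{Z})$, renormalizes both boundary maps to fix $0,1,\infty$, and treats two cases: $A_n(z_n)$ stays in a compact part of $\overline{\triangle}_0$, or tends to a cusp. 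In both cases the hypothesis $d_{AS}(s,s')=0$, through the divergence of the Farey generations $g(A_n^{-1}(e))$, forces the two renormalized boundary sequences to converge pointwise to the same quasisymmetric limit (the cusp case needs the further rescaling $x\mapsto\lambda_{n,r}x$ by dyadic multiples of ${\rm Im}(A_n(z_n))$ and an induction over dyadic rationals using symmetric triples, since one fan edge may keep bounded generation); then the convergence property and conformal naturality of Douady-Earle extensions turn pointwise boundary convergence into convergence of the Beltrami coefficients at $A_n(z_n)$, contradicting the lower bound $c$. This barycentric-extension compactness argument, not a cross-ratio trapping, is the engine of the proof; as it stands, your reverse direction does not constitute a proof. (A minor additional inaccuracy: $d_{AS}$ is defined via $\limsup$ as $\min\{g(e^p_{m+k}),g(e^p_{m-k})\}\to\infty$, fan by fan, not as an infimum over compacta of suprema of distortions on far-away edges.)
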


\begin{theorem}\label{top-equiv}
Let $[h]\in AT(\ud)$ and let $\{[h_n]\}_{n=1}^\infty$ be a
sequence of points in $AT(\ud)$. Assume that $s_n$ and $s$ are the
shear functions induced by $h_n$ and $h$ respectively. Then
 $d_{AT}([h_n],
[h])\rightarrow 0$ if and only if $d_{AS}([s_n],[s])\rightarrow
0$.
\end{theorem}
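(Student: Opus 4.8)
The plan is to deduce Theorem \ref{top-equiv} from \v{S}ari\'c's Theorem B of \cite{Saric2} by localizing it at the boundary circle $\uc$, using the cross-ratio interpretation of the quantities $s(p;m,k)$ recalled in Section 4 together with the boundary-dilatation description of the asymptotic \te\ metric. Recall that $d_{AT}([h_n],[h])\to 0$ is equivalent to saying that the boundary dilatation of the comparison map $h_n\circ h^{-1}$ tends to $1$, and that, by the classical Gardiner--Sullivan characterization, a quasisymmetric map is symmetric exactly when its cross-ratio distortion tends to $1$ on quadruples of $\uc$ degenerating to a point. On the other hand, by the definition of $d_{AS}$ at the beginning of Section 5, the number $d_{AS}([s_n],[s])$ records the limiting discrepancy, measured on configurations approaching $\uc$, between the shear cross-ratios $s_n(p;m,k)$ and $s(p;m,k)$; by the interpretation in Section 4 each $|\log\{s_n(p;m,k)/s(p;m,k)\}|$ is exactly a logarithmic cross-ratio distortion of $h_n\circ h^{-1}$ on a quadruple attached to the fan tip $p$. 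Thus both metrics are governed by the same object, namely the cross-ratio distortion of $h_n\circ h^{-1}$ near $\uc$, and the theorem reduces to showing that these two ways of measuring it control each other uniformly. Throughout, Theorem \ref{charactrization-via-shears} supplies the well-definedness of $d_{AS}$ on the classes $[s]\in\mathcal{AX}$.

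For the implication $d_{AT}([h_n],[h])\to 0\Rightarrow d_{AS}([s_n],[s])\to 0$, fix $\epsilon>0$ and choose $N$ so that for $n\ge N$ the boundary dilatation of $h_n\circ h^{-1}$ is less than $1+\epsilon$. For each such $n$ there is then a compact set $E_n\subset\ud$ and a quasiconformal extension of $h_n\circ h^{-1}$ whose dilatation off $E_n$ is close to $1$; consequently the logarithmic cross-ratio distortion of $h_n\circ h^{-1}$ is at most a bound $\delta(\epsilon)$ on every quadruple whose geodesic hull avoids $E_n$. Since the configurations entering the limsup that defines $d_{AS}$ all eventually avoid any fixed compact set, it follows that $d_{AS}([s_n],[s])\le\delta(\epsilon)$ for every $n\ge N$, where $\delta(\epsilon)\to 0$ as $\epsilon\to 0$. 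Hence $d_{AS}([s_n],[s])\to 0$. The only quantitative ingredient here is one direction of \v{S}ari\'c's comparison (the bound of $\log s(p;m,k)$ in terms of cross-ratio distortion from the proof of Theorem A of \cite{Saric2}), now invoked only for small quadruples.

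For the converse, assume $d_{AS}([s_n],[s])\to 0$. By the Section 4 correspondence this says precisely that the cross-ratio distortion of $h_n\circ h^{-1}$ on all sufficiently small quadruples tends to $1$, with a rate that is uniform as $n\to\infty$. Feeding this into the other direction of \v{S}ari\'c's estimates (the construction showing that control of the shears forces a quasisymmetric map, localized now to edges near $\uc$) yields a uniform quasisymmetry modulus $\rho_n\to 1$ for $h_n\circ h^{-1}$ on short arcs of $\uc$. A quasiconformal extension controlled near $\uc$ (of Beurling--Ahlfors type, or the shear extension used by \v{S}ari\'c in the proof of Theorem A of \cite{Saric2}), glued to a fixed extension over a compact core, then has dilatation near $\uc$ tending to $1$, so the boundary dilatation tends to $1$ and $d_{AT}([h_n],[h])\to 0$.

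The step I expect to be the main obstacle is making both comparisons between shears and cross-ratio distortions uniform in the hyperbolic size and location of the configuration. \v{S}ari\'c's estimates underlying Theorem B are global and are not stated with explicit dependence on how far a quadruple sits from the origin; the heart of the argument is to reexamine them and verify that, when restricted to quadruples degenerating to $\uc$, the constants become uniform and genuinely transfer between the boundary dilatation governing $d_{AT}$ and the boundary limsup governing $d_{AS}$. A secondary difficulty is the supremum over $m$ and $k$ built into each $s(p;m,k)$: one must check that a single family of degenerating quadruples controls this inner supremum as the tip $p$ tends to $\uc$, so that the outer suprema over $p$ and over the compact exhaustion can legitimately be interchanged with the limit in $n$.
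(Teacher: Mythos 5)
Your first direction is essentially sound and follows the paper's own route: the paper gets it in one line from Theorem \ref{equi-of-three-metrics-on-AT} together with the inequality $d_{AS}([s_1],[s_2])\le d_{AC}([h_1],[h_2])$ of Remark 2 in Section 5, which rests on $K_{ae}\ge K_{am}$ (from \cite{wushengjian}) and Lemma \ref{wuchong}. The genuine gap is in your converse. You assert that $d_{AS}([s_n],[s])\to 0$ ``says precisely that the cross-ratio distortion of $h_n\circ h^{-1}$ on all sufficiently small quadruples tends to $1$, with a rate that is uniform in $n$.'' It does not: $d_{AS}$ only compares $s_n(p;m,k)$ with $s(p;m,k)$, i.e.\ it controls the ratios $cr(h_n(Q))/cr(h(Q))$ for the special countable family of Farey quadruples $Q=\{p,a_{m-k}^p,a_m^p,a_{m+k}^p\}$, and only in the limit $\min\{g(e^p_{m+k}),g(e^p_{m-k})\}\to\infty$. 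Promoting control on this thin family to control on \emph{all} degenerating quadruples (equivalently, to a quasisymmetry modulus tending to $1$ on short arcs) is the entire analytic content of the theorem; it is the asymptotic analogue of the sufficiency part of Theorem A of \cite{Saric2}, and your proposal simply invokes it. Worse, the estimates you propose to feed this into are exactly the ones this paper shows to be broken: Lemma 2.2 of \cite{Saric2}, on which Šarić's localization rests, is false (Section 3 constructs a counterexample), and even the repaired statement, Theorem \ref{improvedversion}, yields only a uniform bound $K(ex(h))<C(M_s)$, not the ``dilatation tending to $1$ near $\uc$'' conclusion that your Beurling--Ahlfors gluing construction requires.

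What the paper does instead, and what your sketch lacks, is a renormalization-and-compactness mechanism. Arguing by contradiction, if $d_{AT}([h_n],[h])\not\to 0$ then (by \cite{EarleMarkovicSaric}) there are points $z_n\in\h$ leaving every compact set with $|\mu_{ex(\widetilde h)}(z_n)-\mu_{ex(\widetilde h_n)}(z_n)|\ge c>0$. One maps the Farey triangle containing $z_n$ back to $\triangle_0$ by $A_n\in PSL(2,\mathbb{Z})$, normalizes by M\"obius maps, and splits into two cases: either the renormalized points $z_n'$ stay in a compact part of $\overline{\triangle}_0$, or they escape to a cusp, in which case one rescales further by the dyadic factors $\lambda_{n,r}$. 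In both cases the hypothesis $d_{AS}([s_n],[s])\to 0$ forces the renormalized boundary maps of $h$ and $h_n$ to converge pointwise (on dyadic rationals, via the symmetric-triple arguments in the proof of Theorem \ref{charactrization-via-shears}) to a common quasisymmetric limit, and then the convergence property together with the conformal naturality of Douady--Earle extensions contradicts the $c$-separation of Beltrami coefficients. Nothing in your proposal substitutes for this mechanism; without it, the passage from shear data on Farey quadruples to boundary dilatation remains unproven, which is precisely the obstacle you flagged but did not overcome.
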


\medskip
The Douady-Earle extensions \cite{DouadyEarle} of circle
homeomorphisms of $\uc $ have played an important role in the
study of \te\ spaces and asymptotic \te\ spaces. For some recent
applications of these extensions, we refer to
\cite{EarleMarkovicSaric}, \cite{Markovic}, \cite{MiyachiSaric},
\cite{Saric2} and \cite{FanHu}. Further investigation on
regularities or generalizations of the Douady-Earle extensions to
large classes of circle maps have been developed in
\cite{HuMuzician1}-\cite{HuMuzician5} and \cite{HuPal}.

The conformal naturality and one local regularity near the origin
of the Douady-Earle extensions together play a crucial role in the
proof of Theorem A in \cite{Saric2}. That local regularity is
summarized as Lemma 2.2 in \cite{Saric2}, which states as follows.
{\em Let $\re=\mathbb{R}\cup \{\infty \}$ be the extended real
line, which represents the boundary of the hyperbolic plane when
it takes the upper half plane $\h$ as a model. Let
$\{h_n\}_{n=1}^{\infty }$ be a sequence of orientation-preserving
homeomorphisms $h_n$ of $\re $ fixing three points $0$, $1$ and
$\infty $, and let $\mu _n$ be the Beltrami coefficient of the
Douady-Earle extension $ex(h_n)$ of $h_n$ for each $n$. If there
exists $c_0\ge 1$ such that
$$-c_0\le h_n(-1)\le -\frac{1}{c_0}$$ for all $n$, then there exists a
neighborhood $U$ of the imaginary unit $i\in \h$ and a constant
$0<c<1$ such that $$||\mu_n|_{U}||_{\infty }\le c<1$$ for all $n$.}

Theorem A of \cite{Saric2} gives a necessary and sufficient
condition for $s:\mathcal{F}\rightarrow \mathbb{R}$ to be induced
by a quasisymmetric (resp. symmetric) homeomorphism of $\uc $. The
necessity of the condition can be easily obtained, see Remark 1 in
Section 4 (resp. Remark 2 in Section 5). The main part of the
proof of Theorem A is to show that $M_s<\infty $ is sufficient for
$s$ to be induced by a quasisymmetric homeomorphism of $\uc $.
This part is proved there by applying the above local regularity
of Douady-Earle extensions, that is Lemma 2.2 in \cite{Saric2}.
Counter-examples have been found for that lemma. Using an
algorithm called the MAY operator \cite{Abikoff}, the first
counter-example was tested numerically and then proved in
\cite{HuMuzician3}. In Section 3 of this paper, we provide a
slightly different example without involving any explicit formulas
so that it can be verified relatively easily. This means that
there is a mistake or gap in \cite{Saric2} in the proof of the
sufficient condition for $s$ to be induced by a quasisymmetric
homeomorphism. The second gaol of this paper is to rectify that
mistake or bridge the gap in the proof caused by the
counter-examples. We thereafter develop in Section 3 a new lemma
for being used to bridge the gap. Then in Section 4, we apply our
new lemma to show how the gap is bridged by proving an improved
version of the sufficient condition as follows.

\begin{theorem}\label{improvedversion}
 Let $h$ be a homeomorphism of $\re$ and let
 $s$ be the shear function induced by $h$. Then
$$K(ex(h))<C(M_s);$$
that is, the maximal dilatation of the Douady-Earle extension of
$h$ is bounded from above by a positive constant only depending on
the shear norm of $h$.
\end{theorem}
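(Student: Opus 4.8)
The plan is to exploit the conformal naturality of the Douady-Earle extension together with the cross-ratio interpretation of the shear norm, and to let the corrected lemma established in Section 3 play the role that the erroneous Lemma 2.2 of \cite{Saric2} was meant to play. Since $ex(A\circ h\circ B)=A\circ ex(h)\circ B$ for any M\"obius transformations $A$ and $B$, the Beltrami coefficient of $ex(h)$ at an arbitrary point $w\in\h$ equals the Beltrami coefficient at the base point $i$ of the extension of a suitably normalized conjugate of $h$. Thus it suffices to produce a bound, depending only on $M_s$, for $|\mu_{ex(g)}(i)|$ as $g$ ranges over all normalizations of $h$ arising in this way.

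First I would set up the normalization. For each point $w\in\h$ there is an edge of $\f$, or a fan with tip $p$, whose adjacent triangles sit closest to $w$; I conjugate $h$ by M\"obius maps $A,B$ so that $B(i)=w$, the relevant quadruple of Farey tips is carried to a fixed standard quadruple, and the resulting $g=A\circ h\circ B$ fixes $0,1,\infty$. Under this normalization the shear of that edge and the fan data $s(p;m,k)$ become, via their interpretation as cross-ratio distortions (Section 4), explicit statements about the positions of the images $g(q_j)$ of finitely many marked points $q_j$. The hypothesis $M_s<\infty$ then forces these images to lie in a compact configuration bounded away from degeneracy: the points $g(q_j)$ stay in the correct cyclic order, remain a definite distance apart, and stay away from $\infty$, with all bounds expressed purely in terms of $M_s$.

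Next I would invoke the corrected lemma of Section 3. That lemma supplies the valid substitute for Lemma 2.2 of \cite{Saric2}: whenever a family of normalized homeomorphisms keeps a sufficiently rich collection of marked points in such a non-degenerate compact configuration, the Beltrami coefficients of their Douady-Earle extensions at $i$ are uniformly bounded away from $1$, by a constant depending only on the compactness data. Combining this with the previous paragraph gives $|\mu_{ex(g)}(i)|\le c<1$ with $c=c(M_s)$, and unwinding the conformal naturality yields the desired estimate $K(ex(h))<C(M_s)$. Equivalently, the argument may be phrased by contradiction: were no such $C(M_s)$ to exist, there would be maps $h_n$ with $M_{s_n}$ bounded yet $K(ex(h_n))\to\infty$; normalizing each $h_n$ at a point of worst dilatation produces a family with non-degenerate boundary configurations whose extensions have Beltrami coefficient tending to $1$ at $i$, contradicting the corrected lemma.

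The crux of the proof — and precisely the place where \cite{Saric2} failed — is ensuring that the shear norm pins down enough of the boundary configuration to rule out degeneration. Controlling a single point, as the flawed Lemma 2.2 attempted with $h_n(-1)$, is not enough: the counterexamples of Section 3 exhibit normalized families with $h_n(-1)$ bounded whose extensions nonetheless have Beltrami coefficient tending to $1$ near $i$. The real work is therefore the bookkeeping of the second paragraph, namely translating the bound on all the fan cross-ratios $s(p;m,k)$ into control of a configuration rich enough to meet the hypotheses of the corrected lemma, and verifying that the constants produced depend on $M_s$ alone and not on the location $w$ or on the particular edge or fan used in the normalization.
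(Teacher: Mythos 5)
Your overall frame --- conformal naturality, a contradiction sequence $h_n$ with $M_{s_n}\le M$ but $K(ex(h_n))\to\infty$, and the corrected lemma of Section 3 standing in for \v{S}ari\'c's Lemma 2.2 --- matches the paper, and your idea for the cusp situation (rescaled fan quadruples such as $\{\infty,-\lambda,0,\lambda\}$, whose cross-ratio distortions are still bounded by $M_s$ because they are fan configurations) is exactly the paper's second case. But there is a genuine gap in your second paragraph. First, the normalization you describe is over-determined: a M\"obius map carrying a prescribed Farey quadruple to a fixed standard quadruple is completely determined by that requirement, so you cannot in addition impose $B(i)=w$; the points $w$ for which such a $B$ exists form only a discrete set (the images of $i$ under $PSL(2,\mathbb{Z})$ composed with the rescalings $x\mapsto\lambda x$). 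Second, and more seriously, the corrected lemma bounds $K(ex(g))$ only on a fixed small neighborhood $U$ of $i$, and Farey-compatible normalizations bring a given point into $U$ only when that point lies deep in a cusp of its Farey triangle: writing $z'=A(w)\in\overline{\triangle}_0$ with $A\in PSL(2,\mathbb{Z})$, the rescaled point $z'/\lambda$ approaches $i$ only as $\mathrm{Im}(z')\to\infty$. If $z'$ stays in a compact part of $\overline{\triangle}_0$, or at moderate height in a cusp, no choice of edge, fan, or rescaling places it inside $U$ while keeping the five test quadruples among Farey fan quadruples, so the hypotheses of the corrected lemma simply cannot be arranged there and your ``bookkeeping'' cannot be completed uniformly in $w$.

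This is precisely why the paper's proof splits into two cases, and why its first case uses a mechanism your proposal has no counterpart for. When the normalized points $z_n'$ stay in a compact subset of $\overline{\triangle}_0$, the paper extracts a subsequence along which the shear coordinates $s_n\circ A_n^{-1}(e)$ converge for every edge $e$ (possible since $M_{s_n}<M$ bounds each individual shear), concludes --- citing pp.~2418--2419 of \cite{Saric2} --- that the normalized boundary maps converge pointwise to a homeomorphism $\widehat{h}_\infty$, and then uses two facts: Beltrami coefficients of Douady--Earle extensions converge locally uniformly under pointwise convergence of the boundary maps, and $ex(\widehat{h}_\infty)$ is a real-analytic diffeomorphism, so $|\mu_{ex(\widehat{h}_\infty)}|\le a<1$ on any compact set. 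This contradicts $|\mu_{ex(\widehat{h}_n)}(z_n')|\to 1$ without ever invoking the corrected lemma. Your argument, which tries to run everything through that lemma, fails on exactly this thick-part regime; the lemma is needed, and is only applicable, in the deep-cusp regime where the rescaled points genuinely converge to $i$.
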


In the course of developing and proving our Theorems
\ref{charactrization-via-shears} and \ref{top-equiv}, two related
metrics on $AT(\ud)$ come into place naturally: one is defined by
using cross-ratio distortions under the boundary maps on
degenerating sequences of quadruples and the other is defined by
using quadrilateral dilatations under the boundary maps on
degenerating sequences of quadrilaterals. Tracing back to similar
metrics introduced on the universal \te\ space $T(\ud )$, in the
remaining part of this introduction section we first recall the
two metrics on $T(\ud)$ defined by using cross-ratio distortions
and quadrilateral dilatations respectively; then we define the
corresponding metrics on $AT(\ud )$.

For each $h\in T(\ud)$, let
$$K_e(h)=\inf_{f|_{\uc}=h}K(f),$$
where the infimum is taken over all quasiconformal extensions of
$h$ to $\ud $. Then the \te \ metric $d_T$ is defined as
$$d_T(h_1, h_2)=\frac{1}{2} \log K_e(h_2\circ h_1^{-1})$$
for any two points $h_1, h_2\in T(\ud)$.

Given a quadruple $Q=\{a, b, c, c\}$ of four points $a, b, c$ and
$d$ on $\uc$ arranged in counter-clockwise order, let $M(Q)$
denote the conformal modulus of the (topological) quadrilateral
$\ud(a, b, c, d)$. Each orientation-preserving homeomorphism $h$
of $\uc $ maps $\ud(a, b, c, d)$ to another quadrilateral
$\ud(h(a), h(b), h(c), h(d))$. Correspondingly, we let $M(h(Q))$
denote the conformal modulus of $\ud(h(a), h(b), h(c), h(d))$.
Given $h\in T(\ud)$, the maximal quadrilateral dilatation of $h$
is defined as
$$K_m(h)=\sup_Q\{\frac{M(h(Q))}{M(Q)} \},$$
where the supremum is taken over all quadruples $Q$. Given any two
points $h_1, h_2\in T(\ud)$, we define
$$d_M(h_1, h_2)=\frac{1}{2} \log K_m(h_2\circ h_1^{-1}).$$
By definition, it is clear that
$$d_M(h_1, h_2)=\frac{1}{2}\sup_Q|\log\frac{M(h_2(Q))}{M(h_1(Q))}|.$$
It is well known that
$$\frac{1}{K(f)}\leq\frac{M(h(Q))}{M(Q)}\leq
K(f)$$  for any quasiconformal extension $f$ of $h$ and any
quadruple $Q$. It follows that for any $h\in T(\ud)$, $$K_m(h)\leq
K_e(h),$$ and for any two points in $[h_1], [h_2]\in T(\ud)$,
$$d_M(h_1, h_2)\leq d_T(h_1, h_2).$$

Given a homeomorphism $h$ of $\uc $ and a quadruple $Q=\{a, b, c,
d\}$ of four points on $\uc $ arranged in the counter-clockwise
order, let $h(Q)$ be the image quadruple $\{h(a),h(b),h(c),h(d)\}$
and let $cr(Q)$ denote the following cross ratio of $Q$:
\begin{equation}\label{defofcr}
cr(Q)=\frac{(b-a)(d-c)}{(c-b)(d-a)}.
\end{equation}
Given any two
points $h_1, h_2\in T(\ud)$, we define
$$d_C(h_1, h_2)= \sup_{cr(Q)=1}|\log\frac{cr(h_2(Q))}{cr(h_1(Q))}|.$$

One has known that $d_M$ and $d_C$ are metrics on $T(\ud )$ and
the three metrics $d_T$ and $d_M$ and $d_C$ on $T(\ud )$ are
topologically equivalent. Since we cannot find references
explicitly giving these two statements and their proofs are not
long, we state them as the following folklore theorems and provide
their proofs in Section 2.
\begin{theorem}[Folklore Theorem 1] \label{d-Mandd-C}
Both $d_M$ and $d_C$ define metrics on $T(\ud)$.
\end{theorem}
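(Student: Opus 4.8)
The plan is to prove two things for each of $d_M$ and $d_C$: that the proposed quantity is finite and symmetric with the obvious triple-point normalization, and that it satisfies the three defining axioms of a metric, namely non-negativity with identity of indiscernibles, symmetry, and the triangle inequality. Since $d_M$ and $d_C$ are defined in terms of compositions $h_2\circ h_1^{-1}$ (for $d_M$) and of ratios of the invariants $M(h_2(Q))/M(h_1(Q))$ and $cr(h_2(Q))/cr(h_1(Q))$, the symmetry and triangle inequality should follow cleanly from the multiplicative/compositional structure, while the genuine content lies in showing that the two quantities vanish only when $h_1$ and $h_2$ represent the same point of $T(\ud)$.

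First I would handle the triangle inequality, which I expect to be essentially formal. For $d_C$, write $\log(cr(h_3(Q))/cr(h_1(Q)))=\log(cr(h_3(Q))/cr(h_2(Q)))+\log(cr(h_2(Q))/cr(h_1(Q)))$; taking absolute values, applying the ordinary triangle inequality pointwise in $Q$, and then taking the supremum over all $Q$ with $cr(Q)=1$ gives $d_C(h_1,h_3)\le d_C(h_1,h_2)+d_C(h_2,h_3)$. The subtlety here is the normalization $cr(Q)=1$: the supremum defining $d_C(h_2,h_3)$ is taken over quadruples normalized by $cr(Q)=1$, but in the telescoping argument the intermediate cross ratios $cr(h_2(Q))$ are generally not equal to $1$. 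I would resolve this by using the fact that the set $\{h_2(Q) : cr(Q)=1\}$ ranges over quadruples whose cross ratio is $cr(h_2(Q))$, and re-expressing the middle term via a change of the normalizing quadruple; more cleanly, one can define $d_C$ equivalently as a supremum over \emph{all} quadruples of $|\log(cr(h_2(Q))/cr(h_1(Q)))|$ and check that the normalization $cr(Q)=1$ does not change the supremum because any quadruple can be mapped to a cross-ratio-one quadruple by a M\"obius transformation, under which both numerator and denominator transform compatibly. For $d_M$ the triangle inequality follows from the submultiplicativity $K_m(g\circ f)\le K_m(g)K_m(f)$, which in turn follows from the definition of $K_m$ as a supremum of modulus ratios together with the chain $M((g\circ f)(Q))/M(Q)=\big(M(g(f(Q)))/M(f(Q))\big)\cdot\big(M(f(Q))/M(Q)\big)$; taking logarithms and suprema yields the desired inequality.

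Symmetry of $d_M$ reduces to showing $K_m(h)=K_m(h^{-1})$, which follows from the reciprocal relation $M(h^{-1}(Q'))/M(Q')=\big(M(h(Q))/M(Q)\big)^{-1}$ upon setting $Q'=h(Q)$ and taking suprema, combined with the fact that modulus of a quadrilateral and of its ``conjugate'' quadrilateral are reciprocal. For $d_C$ symmetry is immediate from $|\log(cr(h_2(Q))/cr(h_1(Q)))|=|\log(cr(h_1(Q))/cr(h_2(Q)))|$, again using the normalization remark to match the two suprema.

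The main obstacle, and the heart of the theorem, is the identity of indiscernibles: I must show that $d_M(h_1,h_2)=0$ or $d_C(h_1,h_2)=0$ forces $h_1=h_2$ in $T(\ud)$ (that is, as normalized quasisymmetric homeomorphisms fixing $1,-1,i$). For $d_C$ I would argue that $d_C(h_1,h_2)=0$ means $cr(h_2(Q))=cr(h_1(Q))$ for every quadruple $Q$, i.e. $h:=h_2\circ h_1^{-1}$ preserves all cross ratios; since a circle homeomorphism preserving every cross ratio of boundary quadruples must be a M\"obius transformation of $\uc$, and since the normalization fixes three points, $h$ is the identity, whence $h_1=h_2$. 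For $d_M$ the analogous step uses that $K_m(h)=1$ forces every modulus ratio to equal $1$, so $h$ preserves the modulus of every quadrilateral; by the characterization of conformal (M\"obius) boundary maps via modulus preservation — equivalently by relating $K_m(h)=1$ to the boundary dilatation through the known inequality $K_m(h)\le K_e(h)$ together with the reverse control that modulus preservation implies conformality — one again concludes $h$ is M\"obius and hence the identity after normalization. The delicate point I anticipate is justifying rigorously that cross-ratio preservation (or modulus preservation) of boundary quadruples forces the map to be M\"obius; I would either cite the standard characterization of M\"obius transformations as the cross-ratio-preserving homeomorphisms of $\uc$, or give a short direct argument by fixing three points and using a one-parameter family of quadruples to pin down the value of $h$ at every point.
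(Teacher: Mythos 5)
Your formal steps (symmetry, triangle inequality) are fine --- in fact easier than you make them: $d_C(h_1,h_2)$ is defined by comparing $cr(h_1(Q))$ and $cr(h_2(Q))$ on the \emph{same} quadruple $Q$ with $cr(Q)=1$, so all three suprema in the triangle inequality run over one and the same family and the telescoping needs no renormalization; likewise the supremum defining $K_m$ runs over all quadruples with no cross-ratio constraint. The genuine gap is in the heart of the matter, the identity of indiscernibles for $d_C$. You assert that $d_C(h_1,h_2)=0$ gives $cr(h_1(Q))=cr(h_2(Q))$ for \emph{every} quadruple, so that $h:=h_2\circ h_1^{-1}$ preserves all cross ratios and is therefore M\"obius. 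But the definition only gives this equality for quadruples with $cr(Q)=1$, and your proposed justification for dropping the constraint --- that ``any quadruple can be mapped to a cross-ratio-one quadruple by a M\"obius transformation'' --- is false: the cross ratio is a M\"obius invariant, so no M\"obius transformation can carry a quadruple with $cr(Q)\neq 1$ to one with cross ratio $1$. Consequently you only know $cr(h(Q'))=cr(Q')$ on the restricted family $Q'=h_1(Q)$ with $cr(Q)=1$, and the characterization of M\"obius maps as the cross-ratio-preserving homeomorphisms cannot be invoked.

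The paper closes exactly this gap by a bootstrap that never leaves the family $cr(Q)=1$: it conjugates to the upper half-plane so that both maps $\tilde{h}_1,\tilde{h}_2$ fix $0,1,\infty$, then applies the equality $cr(\tilde{h}_1(Q))=cr(\tilde{h}_2(Q))$ to suitable cross-ratio-one quadruples with vertices in $\mathbb{Z}\cup\{\infty\}$ to force agreement on $\mathbb{Z}$, then on $\frac{\mathbb{Z}}{2}$, inductively on $\frac{\mathbb{Z}}{2^n}$, and concludes $\tilde{h}_1=\tilde{h}_2$ by density of the dyadic rationals in $\re$. Your argument for $d_M$, by contrast, can be repaired independently of $d_C$, precisely because $K_m$ is a supremum over \emph{all} quadruples: $d_M(h_1,h_2)=0$ forces $M(h(Q))=M(Q)$ for every $Q$, and Lemma 1 of the paper (Wu's identity $d_{\rho_{(0,-1)}}(cr(Q_1),cr(Q_2))=|\log (M(Q_1)/M(Q_2))|$) then yields $cr(h(Q))=cr(Q)$ for all $Q$, hence $h$ is M\"obius and equals the identity after normalization; the paper instead uses Lemma 1 to reduce $d_M=0$ to $d_C=0$ and sends everything through the bootstrap. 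But as written, the $d_C$ half of your proof --- and with it the theorem --- does not go through.
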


\begin{theorem}[Folklore Theorem 2]
\label{equi-of-three-metrics-on-T} The three metrics $d_T$, $d_M$
and $d_C$ on $T(\ud )$ are topologically equivalent.
\end{theorem}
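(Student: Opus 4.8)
The plan is to prove topological equivalence by showing that a sequence converges to a limit in one of the three metrics if and only if it does so in each of the others; since all three are metrics on the same underlying set $T(\ud)$ and metric topologies are sequential, this yields equality of the three topologies. Because $d_T$ and $d_M$ are built from the composition $g_n=h_n\circ h^{-1}$ while $d_C$ is built from the images $h_n(Q),h(Q)$ of normalized quadruples, the whole argument reduces to analyzing how these distortion quantities behave as they approach their trivial values, i.e. as $g_n$ tends to the identity and the relevant quadruples become undistorted. Throughout, the direction $d_T\Rightarrow d_M$ is already free from the stated inequality $d_M\le d_T$ (equivalently $K_m\le K_e$), so the work lies in the reverse implications.

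First I would record the elementary comparison between the two conformal invariants of a circle quadruple. Four points of $\uc$ in cyclic order carry a single Möbius modulus, so there is a fixed increasing smooth homeomorphism $\Phi$ with $M(Q)=\Phi(cr(Q))$; in particular $\Phi(1)=M_0>0$ and $\Phi'(1)>0$, so near the trivial value the modulus distortion $|\log (M(g(Q))/M(Q))|$ and the cross-ratio distortion $|\log (cr(g(Q))/cr(Q))|$ are comparable. This reduces the comparison of $d_M$ and $d_C$ to bookkeeping with $\Phi$ once the families of quadruples are matched; the one subtlety is that $d_C$ restricts to $cr(Q)=1$ whereas $K_m$ ranges over all $Q$, which I would handle by observing that the modulus distortion of $g$ on a general quadruple is controlled by its distortion on the normalized family after precomposing with the fixed Möbius normalization to cross ratio $1$.

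The core is then the implication $d_M\Rightarrow d_T$, that is $K_m(g_n)\to 1\Rightarrow K_e(g_n)\to 1$. The route I would take is to pass through the quasisymmetry constant. From $K_m(g_n)\to 1$ and the $\Phi$-comparison, the cross-ratio distortion of $g_n$ on the symmetric ($cr(Q)=1$) quadruples tends to $0$; since the quasisymmetry ratio $\bigl(g_n(x)-g_n(x-t)\bigr)/\bigl(g_n(x+t)-g_n(x)\bigr)$ is exactly the cross ratio of the symmetric quadruple $\{x-t,x,x+t,\infty\}$ (whose cross ratio is $1$), the quasisymmetry constant $\rho(g_n)\to 1$. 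Feeding this into the Beurling--Ahlfors extension, whose dilatation is bounded by an explicit increasing function of $\rho$ tending to $1$ as $\rho\to 1$, produces a quasiconformal extension of dilatation tending to $1$, hence $K_e(g_n)\to 1$. Running the same chain $K_m\to1\Leftrightarrow$ cross-ratio distortion $\to0\Leftrightarrow\rho\to1$ simultaneously closes the loop with $d_C$, and a normal-families argument (normalized uniformly quasisymmetric maps are equicontinuous, and any limit with $K_m=1$ is Möbius, hence the identity) lets me pass freely between the pairwise and composed formulations of the three distances.

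I expect the main obstacle to be precisely the upgrade from boundary and quadrilateral data to the extension dilatation $K_e$, namely the step $K_m(g_n)\to 1\Rightarrow K_e(g_n)\to 1$. There is no universal Lipschitz comparison between $K_e$ and $K_m$, so one cannot argue metrically; nor does lower semicontinuity suffice, since it controls $K_e$ from below, the wrong direction; and, as the counterexamples developed in this paper show, one cannot invoke a naive local regularity of the Douady--Earle extension. The resolution I propose is to obtain the required upper bound on $K_e$ constructively, via the Beurling--Ahlfors extension and the quantitative dependence of its dilatation on the quasisymmetry constant, thereby sidestepping the flawed local-regularity route entirely.
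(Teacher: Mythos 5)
Your overall architecture coincides with the paper's: the cycle $d_T\Rightarrow d_M\Rightarrow d_C\Rightarrow d_T$ on sequences, with the first arrow free from $K_m\le K_e$, and with the modulus/cross-ratio comparison as the second arrow. Your function $\Phi$ with $M(Q)=\Phi(cr(Q))$ is the same fact that the paper packages as Lemma 1 (Wu's identity from \cite{wuchong}), and your use of it for $d_M\Rightarrow d_C$ is essentially the paper's second step (note you do need the quasisymmetry of $h$ there, to keep $cr(h(Q))$ in a compact set). Where you genuinely diverge is the return arrow to $d_T$. The paper bounds $\sup_{cr(Q)=1}|\log cr(h_n(Q))|$ uniformly, invokes the theorem of \cite{HuMuzician1} to get a uniform bound on $K(ex(h_n))$, and then runs a compactness argument in $T(\ud)$; you instead read off the quasisymmetry constant of the composition $g_n=h_n\circ h^{-1}$ from the $cr=1$ quadruples $\{x-t,x,x+t,\infty\}$ (after conjugating to $\re$, which is legitimate since cross-ratios are M\"obius invariant) and feed it into the Beurling--Ahlfors extension. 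For the implication $d_M(h_n,h)\to 0\Rightarrow d_T(h_n,h)\to 0$ this is sound and complete: $d_M$ is defined through the composition, $M\equiv 1$ on the $cr=1$ family, so $K_m(g_n)\to 1$ gives $\sup_{cr(Q)=1}|\log cr(g_n(Q))|\to 0$, hence $\rho(g_n)\to 1$, hence $K_e(g_n)\to 1$ by the classical quantitative Beurling--Ahlfors bound. This is more quantitative than the paper's argument, needs no Douady--Earle theory, and avoids the paper's delicate appeal to extracting a $d_T$-convergent subsequence from a $d_T$-bounded sequence (Teichm\"uller balls are not compact).

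However, the cycle is not closed, and this is a genuine gap: you never establish an arrow out of $d_C$, i.e. $d_C(h_n,h)\to 0\Rightarrow d_T(h_n,h)\to 0$, which is the only implication making the $d_C$-topology as fine as the other two. The trouble is that $d_C$ is a pairwise supremum over $\{cr(Q)=1\}$, while your chain needs the composed quantity $\sup_{cr(Q)=1}|\log cr(g_n(Q))|$; substituting $Q=h(Q')$ replaces the normalized family by $\{Q':cr(h(Q'))=1\}$, whose cross-ratios only lie in some compact set $[1/C,C]$. Neither of your two devices bridges this. ``Precomposing with the fixed M\"obius normalization to cross ratio $1$'' is vacuous: M\"obius maps preserve cross-ratios, so no such normalization exists. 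And the normal-families argument as you describe it (equicontinuity of the normalized maps plus identification of the limit) yields only uniform convergence $h_n\to h$, which controls none of the three metrics --- for a fixed nontrivial quasisymmetric $\psi$ equal to the identity off $[0,1]$, the rescalings $\psi_n(x)=2^{-n}\psi(2^n x)$ converge uniformly to the identity while their $d_T$-, $d_M$- and $d_C$-distances to the identity stay bounded below. What you need is a lemma saying: for uniformly quasisymmetric sequences, vanishing relative cross-ratio distortion on the $cr=1$ family forces vanishing distortion on every family $\{cr\in[1/C,C]\}$. That is proved not by plain normal families but by renormalizing at the putative bad, degenerating quadruples: precompose with M\"obius maps carrying a fixed nondegenerate quadruple to them, postcompose to fix $1,-1,i$, and apply equicontinuity and limit-identification to the renormalized sequences (the device used in this paper's proofs of Theorems 6 and 7); it is also exactly the job the paper assigns to \cite{HuMuzician1}. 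With such a lemma inserted, your proof closes; without it, the equivalence of $d_C$ with the other two metrics is unproven.
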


An orientation-preserving homeomorphism $h$ of $\uc$ is said to be
{\em symmetric} if
$$(1+\delta(x,t))^{-1} \leq \frac{|h(x+t)-h(x)|}{|h(x)-h(x-t)|}\leq 1+\delta(x, t),
$$
where $\delta(x, t)\rightarrow 0$ uniformly in $x\in \mathbb{S}^1$
as $t\rightarrow 0$. By letting $T_0(\ud)$ be the subspace of
$T(\ud)$ whose elements are symmetric homeomorphisms, the
asymptotic \te\ space can be expressed as the quotient space
$$AT(\ud)= T(\ud)/ T_0(\ud).$$ Given any $h\in T(\ud)$, we denote by $[h]$ the corresponding point in $AT(\ud)$.
This space was studied by Gardiner and Sullivan in
\cite{GardinerSullivan} and the asymptotic \te \ spaces of Riemann
surfaces were studied in \cite{EarleGardinerLakic2000} \
\cite{EarleGardinerLakic2004},\ \cite{EarleMarkovicSaric},\
\cite{GardinerLakic} and etc.

Parallel to $d_T, d_M$ and $d_C$  on $T(\ud)$, three metrics can
be defined on $AT(\ud)$.

Let $f$ be a quasiconformal conformal mapping on $\ud$. The {\em
boundary dilatation}  $H(f)$ of $f$ is defined as
$$H(f)=\inf_E K(f|_{\ud\backslash E}),$$ where the infimum  is taken over all compact subsets $E$ of $\ud$.
Given any point $[h] \in AT(\ud)$, the boundary dilatation of
$[h]$ is defined as
$$K_{ae}([h])=\inf_{f|_{\uc}\in [h]} H(f).$$
 The  \te \ distance between two points $[h_1],[h_2]\in
AT(\ud)$ is defined as
$$d_{AT}([h_1],[h_2])=\frac{1}{2}\log K_{ae}([h_2\circ (h_1)^{-1}]).$$

Given a topological quadrilateral $Q=\ud(a,b,c,d)$ or  quadruple
$Q=\{a,b,c,d\}$,  we define the {\em minimal scale} of $Q$ as
$$s(Q)=\min\{|a-b|,|b-c|,|c-d|,|d-a|\}.$$
Then a sequence $\{Q_n=\ud(a_n,b_n,c_n,d_n)\}_{n=1}^\infty$ of
topological quadrilaterals
 is said to be {\em degenerating}
if $s(Q_n)\rightarrow 0$ as $n\rightarrow \infty.$ A sequence
$\{Q_n=\{a_n ,b_n,c_n,d_n\}\}_{n=1}^\infty$  of
 quadruples is said to be {\em degenerating}
if $cr(Q_n)=1$ for each $n$ and $s(Q_n)\rightarrow 0$ as
$n\rightarrow \infty $. For any $[h]\in AT(\ud)$, we define the
asymptotically maximal quadrilateral dilatation of $[h]$ as
$$K_{am}([h])=\sup_{\{Q_n\}}\limsup_{n\rightarrow \infty}\{\frac{M(h(Q_n))}{M(Q_n)} \},$$
where the supremum is taken over all degenerating sequences of
topological quadrilaterals $\{Q_n\}_{n=1}^\infty$. Given any two
points $[h_1],[h_2]\in AT(\ud)$, we define
 $$d_{AM}([h_1], [h_2])=\frac{1}{2}
 \log K_{am}([h_2\circ h_1^{-1}]).$$
Clearly,
$$d_{AM}([h_1], [h_2])=\frac{1}{2}\sup_{\{Q_n\}}\limsup_{n\rightarrow \infty}|\log\frac{M(h_2(Q_n))}{M(h_1(Q_n))}|,$$
where the supremum is taken over all degenerating sequences
$\{Q_n\}_{n=1}^\infty$ of topological quadrilaterals.

From the proof of Theorem 1 of  \cite{wushengjian}, it follows
that for any $[h]\in AT(\ud )$,
$$K_{ae}([h])\geq K_{am}([h]).$$
Thus, for any two points $[h_1],[h_2]\in AT(\ud)$,
$$d_{AM}([h_1], [h_2])\le d_{AT}([h_1], [h_2]).$$

Given any two points $[h_1], [h_2]\in AT(\ud)$, we define
$$d_{AC}([h_1], [h_2])= \sup_{\{Q_n\}}\limsup_{n\rightarrow \infty}|\log \frac{cr(h_2(Q_n))}{cr(h_1(Q_n))}|,$$
where the supremum is taken over all degenerating sequences
$\{Q_n\}_{n=1}^\infty$ of quadruples.

In Section 2, we also show the following two theorems.

\begin{theorem}\label{d-AMandd-AC}
Both $d_{AM}$ and $d_{AC}$ define metrics on $AT(\ud)$.
\end{theorem}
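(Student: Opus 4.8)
The plan is to reduce both assertions to the verification of the three metric axioms, and to treat $d_{AM}$ and $d_{AC}$ in parallel through the conformal correspondence $M(Q)=\Phi(cr(Q))$ between the modulus of a disk quadrilateral and the cross ratio of its four boundary vertices, where $\Phi$ is an increasing homeomorphism normalized so that $\Phi(1)=1$. Nonnegativity is immediate. Symmetry follows at once from $|\log(x/y)|=|\log(y/x)|$ applied inside the $\limsup$ and the $\sup$. For the triangle inequality I would argue exactly as for $d_M$ and $d_C$ in Theorem \ref{d-Mandd-C}: for a fixed degenerating sequence $\{Q_n\}$ one has the pointwise bound $|\log\frac{cr(h_3(Q_n))}{cr(h_1(Q_n))}|\le |\log\frac{cr(h_3(Q_n))}{cr(h_2(Q_n))}|+|\log\frac{cr(h_2(Q_n))}{cr(h_1(Q_n))}|$, and since $\limsup$ is subadditive and each resulting $\limsup$ is dominated by the corresponding supremum, taking the supremum over $\{Q_n\}$ on the left gives $d_{AC}([h_1],[h_3])\le d_{AC}([h_1],[h_2])+d_{AC}([h_2],[h_3])$; the identical computation with $M$ in place of $cr$ handles $d_{AM}$. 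What remains, and is where the real content lies, is that $d_{AM}$ and $d_{AC}$ are well defined on $AT(\ud)$ and that they separate points.

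Both of these properties I would deduce from one characterization: a normalized quasisymmetric map $g$ is symmetric if and only if its asymptotic distortion is trivial on the relevant degenerating family, i.e. $M(g(Q_n))/M(Q_n)\to 1$ for every degenerating sequence of quadrilaterals $\{Q_n\}$ (equivalently the cross-ratio distortion tends to $1$ on the appropriate degenerating quadruples). The forward direction, that a symmetric $g$ produces trivial distortion, rests on the fact that symmetry is precisely asymptotic affineness at small scales: on a degenerating quadrilateral the cross ratio is, to leading order, a ratio of lengths of a few consecutive boundary gaps, and each such ratio is distorted by a factor $1+o(1)$ as the minimal scale tends to $0$, uniformly controlled by the modulus of symmetry $\delta(x,t)$. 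Granting this direction, well-definedness follows: if $[h_i]=[h_i']$ then $h_i'=\sigma_i\circ h_i$ with $\sigma_i$ symmetric, and since the quasisymmetric $h_i$ carries degenerating sequences to degenerating sequences, the extra factors $\log\frac{cr(\sigma_i h_i(Q_n))}{cr(h_i(Q_n))}$ tend to $0$, so the $\limsup$ attached to each admissible $\{Q_n\}$ is unchanged and the two suprema agree.

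For the separation of points I would set $g=h_2\circ h_1^{-1}$, so that $[h_1]=[h_2]$ is equivalent to $g$ being symmetric. For $d_{AM}$ this is clean: writing $P_n=h_1(Q_n)$ and using that a quasisymmetric homeomorphism maps the family of all degenerating quadrilaterals onto itself, one gets $d_{AM}([h_1],[h_2])=\frac12\sup_{\{P_n\}}\limsup_n|\log\frac{M(g(P_n))}{M(P_n)}|$ over all degenerating quadrilaterals, which by the characterization vanishes exactly when $g$ is symmetric. Here the modulus formulation is robust at the degenerate ends, because $\Phi$ behaves logarithmically there, so a bounded multiplicative cross-ratio distortion is absorbed into a modulus ratio that still tends to $1$; conversely, a failure of symmetry yields a $cr=1$ clustering sequence on which the modulus ratio stays bounded away from $1$, and such a sequence is an admissible degenerating quadrilateral. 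Thus $d_{AM}([h_1],[h_2])=0$ forces $[h_1]=[h_2]$.

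The main obstacle is the separation of points for $d_{AC}$, where the admissible quadruples are constrained by $cr(Q_n)=1$. Under the change of variables $P_n=h_1(Q_n)$ this constraint is not preserved: the equal-gap domain triples forced by $cr=1$ are sent by $h_1$ to triples whose consecutive gaps are only comparable, with ratio the (bounded but generally nontrivial) symmetric quotient of $h_1$. Hence $d_{AC}([h_1],[h_2])$ tests the cross-ratio distortion of $g$ only on comparable-gap triples with $h_1$-determined ratios, whereas a failure of symmetry of $g$ is seen most directly on equal-gap triples. To close the gap I would first reduce, as above, the cross-ratio distortion on such a degenerating $cr=1$ quadruple to a ratio of difference quotients of $g$ on two adjacent comparable intervals; then establish the equivalent comparable-scale formulation of symmetry, namely that $g$ is symmetric if and only if every such ratio of adjacent difference quotients tends to $1$ along shrinking comparable intervals; and finally, by a normalization and compactness argument near a cluster point of the locations witnessing the failure of symmetry, produce an admissible degenerating sequence in the $h_1$-image family on which the distortion stays bounded away from $1$. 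I expect this interplay among the $cr=1$ normalization, the change of variables by $h_1$, and the degeneration structure of the quadruples to be the most delicate part of the argument.
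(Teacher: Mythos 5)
Your treatment of the routine axioms is fine, and two things you do have real value. First, you flag well-definedness on $AT(\ud)$, which the paper silently passes over; but the implication you need there (symmetric maps distort degenerating configurations asymptotically trivially) should not rest on your ``leading-order gap ratio'' heuristic, which does not cover degenerating quadruples all four of whose points cluster at several different scales. It does follow from results already quoted in the paper: $K_{ae}\ge K_{am}$ from \cite{wushengjian}, the fact that symmetric maps satisfy $K_{ae}=1$ (\cite{GardinerSullivan}), and Lemma \ref{wuchong} to pass between moduli and cross ratios. Second, your separation argument for $d_{AM}$ is correct and more elementary than the paper's: a failure of symmetry of $g=h_2\circ h_1^{-1}$ at points $x_n$ and scales $t_n\to 0$ gives quadruples $P_n=\{x_n-t_n,x_n,x_n+t_n,w_n\}$ (with $w_n$ antipodal to $x_n$) satisfying $cr(P_n)=1$ and $s(P_n)\to 0$, on which $cr(g(P_n))$ stays away from $1$, hence $M(g(P_n))/M(P_n)$ stays away from $1$; pulling back by $h_1^{-1}$ keeps the sequence degenerating because $d_{AM}$ imposes no cross-ratio constraint on the domain quadrilaterals.

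The genuine gap is the separation property for $d_{AC}$, which is the entire content of the theorem and which you leave as a three-step plan rather than an argument. The plan as stated cannot close it: even granting your ``comparable-scale formulation of symmetry,'' a failure of symmetry of $g$ is witnessed on \emph{some} sequence of adjacent comparable intervals, whereas $d_{AC}([h_1],[h_2])=0$ constrains $g$ only on the restricted family of pairs $\bigl(h_1([u-r,u]),\,h_1([u,u+r])\bigr)$, and nothing in your outline excludes a non-symmetric $g$ whose distortion is trivial on that restricted family. What actually closes the gap is a compactness-plus-rigidity mechanism that your step three gestures at but does not contain: zoom in at the failure points with scales matched through $h_1$ (choose $r_n$ so that $|h_1(u_n\pm r_n)-h_1(u_n)|=s_n$), use uniform quasisymmetry to extract locally uniform limits $H_1^*,H_2^*$ of the affinely rescaled $h_1,h_2$, note that $d_{AC}=0$ forces $cr(H_2^*(Q))=cr(H_1^*(Q))$ for every quadruple with $cr(Q)=1$ (cross ratios are invariant under the rescalings), and then invoke the rigidity of Theorem \ref{d-Mandd-C} to conclude $H_2^*=H_1^*$ --- contradicting the fact that the rescaled $g$, normalized to fix $0,1,\infty$, converge to a map that is not the identity because the symmetry defect survives the rescaling. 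This rigidity step applied to renormalized limits is indispensable and absent from your proposal; it is also, in different clothing, the engine of the paper's proof, which converts $[h_1]\neq[h_2]$ into a separation of Beltrami coefficients of Douady--Earle extensions on disks $D_n\to\uc$ via \cite{EarleMarkovicSaric}, zooms by M\"obius maps $\gamma_n$, replaces quasisymmetric compactness by compactness and conformal naturality of the extensions, and applies Theorem \ref{d-Mandd-C} to the uniform limits; the reduction $d_{AM}=0\Rightarrow d_{AC}=0$ via Lemma \ref{wuchong} then lets one argument serve both metrics.
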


\begin{theorem}\label{equi-of-three-metrics-on-AT}
The three metrics $d_{AT}$, $d_{AM}$ and $d_{AC}$ on $AT(\ud )$
are topologically equivalent.
\end{theorem}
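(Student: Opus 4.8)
The plan is to prove topological equivalence by establishing, for each pair of metrics, that a sequence converging to a point in one metric converges to the same point in the other. Since the excerpt already records the inequality $d_{AM}([h_1],[h_2]) \le d_{AT}([h_1],[h_2])$, convergence in $d_{AT}$ immediately forces convergence in $d_{AM}$, so the work is concentrated in two remaining links: the comparison of $d_{AM}$ with $d_{AC}$, and the reverse passage from $d_{AM}$-convergence back to $d_{AT}$-convergence. Throughout I would set $g_n = h_n\circ h^{-1}$, so that statements about $d_{\bullet}([h_n],[h])$ become statements about the dilatation-type quantities of $[g_n]$ near the identity.

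First I would dispose of the pair $(d_{AM}, d_{AC})$ by importing the classical relation that the conformal modulus of a quadrilateral in $\ud$ is a fixed increasing homeomorphism $\Phi$ of its cross-ratio, $M(Q) = \Phi(cr(Q))$. For a degenerating sequence $\{Q_n\}$ one has $\log(M(h_2(Q_n))/M(h_1(Q_n))) = \log(\Phi(cr(h_2(Q_n)))/\Phi(cr(h_1(Q_n))))$, and when $h_1,h_2$ are close to the identity the image cross-ratios $cr(h_i(Q_n))$ remain in a compact subinterval of $(0,\infty)$ on which $t\mapsto \log\Phi(e^t)$ and its inverse are bi-Lipschitz; this termwise comparison passes to the $\limsup$ and then to the supremum over sequences. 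The only structural difference between the two metrics, namely that $d_{AM}$ ranges over all degenerating quadrilaterals while $d_{AC}$ restricts to normalized quadruples with $cr(Q_n)=1$, is exactly the discrepancy already reconciled in the non-asymptotic Folklore Theorem \ref{equi-of-three-metrics-on-T} (there the restriction to $cr(Q)=1$ is what tames the thin quadrilaterals on which $\log\Phi$ degenerates), and the same reconciliation applies verbatim along degenerating sequences. This yields $d_{AM}([h_n],[h])\to 0$ if and only if $d_{AC}([h_n],[h])\to 0$.

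It remains to show that $d_{AM}([h_n],[h])\to 0$ implies $d_{AT}([h_n],[h])\to 0$; this is the main obstacle. The claim is that $K_{am}([g_n])\to 1$ forces $K_{ae}([g_n])\to 1$, that is, control of the quadrilateral dilatation on every degenerating sequence of quadrilaterals controls the boundary dilatation. I would approach this through the theory of boundary dilatation: $K_{ae}([g_n])$ is an infimum of $H(f)$ over extensions $f$, and $H(f)$ measures the dilatation of $f$ off arbitrarily large compact subsets of $\ud$, which is precisely the behavior probed by quadrilaterals with $s(Q)\to 0$. Concretely, I would bound $K_{ae}$ from above by a function of $K_{am}$ using the geometric definition of quasiconformality together with a normal-families argument applied to a canonical extension, for instance the Douady-Earle extension $ex(g_n)$: its conformal naturality lets one transport a near-degenerate quadrilateral to a fixed location and read off the local dilatation from the modulus distortion, while the degeneracy $s(Q)\to 0$ forces the comparison into a shrinking neighborhood of the boundary, which is what connects it to $H(f)$. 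The delicate point is that $K_{ae}$ is an infimum over all extensions whereas $K_{am}$ is intrinsic to the boundary values, so one must exhibit an explicit extension whose boundary dilatation is estimated by $K_{am}$; the compactness of the relevant family of normalized maps is what makes such an estimate uniform and yields $K_{ae}([g_n])\to 1$.

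Combining the three links closes the loop: $d_{AT}\to 0 \Rightarrow d_{AM}\to 0 \Rightarrow d_{AT}\to 0$ together with $d_{AM}\to 0 \Leftrightarrow d_{AC}\to 0$ shows that the three metrics induce the same topology on $AT(\ud)$. I expect a reverse estimate of the form $d_{AT}\le \psi(d_{AM})$ with $\psi$ continuous at $0$ to be the crux. If instead the sharper identity $K_{ae}=K_{am}$ (the asymptotic analogue of the geometric definition of quasiconformality) is available from the methods behind Theorem~1 of \cite{wushengjian}, then $d_{AT}=d_{AM}$ and this step becomes immediate, leaving only the $(d_{AM},d_{AC})$ comparison of the second paragraph to be argued.
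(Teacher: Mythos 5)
Your overall architecture (a loop of one-directional implications, with Douady--Earle extensions doing the heavy lifting at the crux) matches the paper's, but two of your three links have genuine gaps. The first is the claimed two-way equivalence $d_{AM}\Leftrightarrow d_{AC}$ by ``termwise comparison.'' The direction $d_{AM}\Rightarrow d_{AC}$ does work this way (it is the paper's second step, and it is precisely the normalization $cr(Q_n)=1$ that keeps $cr(h(Q_n))$ in a compact subset of $(0,\infty)$, where the modulus--cross-ratio relation is bi-Lipschitz). The converse direction is not a termwise comparison: $d_{AM}$ ranges over \emph{all} degenerating quadrilaterals, whose cross-ratios may tend to $0$ or $\infty$ even for the identity map (degeneration only requires one short side), and near those ends $\log\Phi$ is not bi-Lipschitz with respect to $\log cr$; also, your premise that asymptotic closeness of $h_1,h_2$ to the identity confines the image cross-ratios to a compact interval is unjustified, since $d_{AM}$-closeness is a purely asymptotic condition carrying no pointwise control. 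Your appeal to Theorem \ref{equi-of-three-metrics-on-T} is moreover circular: in the paper's proof of that theorem the implication $d_C\Rightarrow d_M$ is never proved by direct comparison but only via the detour $d_C\Rightarrow d_T\Rightarrow d_M$, using Douady--Earle extensions. Since your loop needs \emph{some} implication out of $d_{AC}$ (otherwise $d_{AC}$-convergence yields nothing), this gap is structural; it is exactly why the paper takes $d_{AC}\Rightarrow d_{AT}$, not your $d_{AM}\Rightarrow d_{AT}$, as the crux.

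The second gap is the crux itself, which you describe but do not prove, and the two mechanisms you omit are the hard ones. (i) Compactness: smallness of $d_{AM}$ or $d_{AC}$ gives no uniform quasisymmetry bound on the $h_n$, so the normal-families step you invoke (transported normalized boundary maps converging to homeomorphisms) is unjustified as stated. The paper manufactures compactness by combining Lemma 2 (from \cite{HuMuzician2}: a bound on the cross-ratio distortion norm controls the dilatation of the Douady--Earle extension near $\uc$, hence bounds $H(ex(h_n))$) with a replacement of representatives: truncating $\mu_{ex(h_n)}$ to zero on $|z|<r_n$ produces new representatives of the classes $[h_n]$ with uniformly bounded $K(ex(\cdot))$. (ii) The contradiction mechanism: there is no direct way to ``read off the local dilatation from the modulus distortion.'' The paper instead converts $d_{AT}([h_n],[h])\nrightarrow 0$, via the characterization of \cite{EarleMarkovicSaric}, into hyperbolic disks $D_n\rightarrow\uc$ on which $\|\mu_{ex(h_n)}-\mu_{ex(h)}\|_{L^\infty}\geq\epsilon$; shows that the two transported normalized sequences converge uniformly to the \emph{same} quasisymmetric limit (this is where degenerating quadruples and the $d_{AC}$ hypothesis enter); invokes the convergence of Beltrami coefficients of Douady--Earle extensions under uniform convergence of boundary maps; and transfers back to $D_n$ by conformal naturality to reach a contradiction. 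Finally, your fallback $K_{ae}=K_{am}$ cannot be cited: \cite{wushengjian} yields only $K_{am}\leq K_{ae}$ (as the introduction records), and even the non-asymptotic analogue $K_m=K_e$ fails in general, so this route does not make the crux ``immediate.''
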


The paper is arranged as follows. We first give proofs of Theorems
4-7 in the second section. In the third section, we provide a
counter-example to Lemma 2.2 in \cite{Saric2} and introduce a new
lemma. Then in the fourth section, we apply the new lemma to
bridge the gap in the proof of Theorem A  in \cite{Saric2} by
showing Theorem \ref{improvedversion}. The proofs of Theorems
\ref{charactrization-via-shears} and \ref{top-equiv} are given in
the fifth section.

\medskip
\noindent {\bf Acknowledgement:} The authors wish to thank
Professors Frederick Gardiner for his comments and suggestions on
the writing of the introduction.

\section{Topological equivalence of three metrics on $T(\ud )$ and $AT(\ud )$}

In this section, we show Theorems 4-7. Let $\rho_{(0,-1)}$ be the
Poincar$\mathrm{\acute{e}}$ density on
$\mathbb{C}\backslash\{0,-1\}$ and let
$d_{\rho_{(0,-1)}}(\alpha,\beta)$ be the distance under
$\rho_{(0,-1)}$ between two points $\alpha,\beta\in
\mathbb{C}\backslash\{0,-1\}$. The following lemma was proved in
\cite{wuchong}.

\begin{lemma} \label{wuchong} For any quadruples $Q_1$ and $Q_2$ on $\uc$,
\begin{equation}
d_{\rho_{(0,-1)}}(cr(Q_1), cr(Q_2))=|\log\frac{M(Q_1)}{M(Q_2)}|.
\end{equation}
\end{lemma}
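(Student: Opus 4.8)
The plan is to reduce both sides of the identity to a length computation along a single geodesic of the hyperbolic surface $\mathbb{C}\setminus\{0,-1\}$, and then to identify $\log M$ with a hyperbolic arc-length parameter on that geodesic. First I would record two structural facts. Since the conformal modulus $M(Q)$ of $\ud(a,b,c,d)$ is a conformal invariant and any two quadruples with the same cross ratio are related by a M\"obius transformation of $\widehat{\mathbb{C}}$ preserving $\ud$, the modulus depends only on $cr(Q)$; write $M(Q)=\Phi(cr(Q))$. Second, a direct inspection of $(\ref{defofcr})$ shows that for a genuine quadruple the value $cr(Q)$ is a positive real number, and that the three collisions of two of $a,b,c,d$ produce exactly the values $0,-1,\infty$ (this is why the relevant punctured domain is $\mathbb{C}\setminus\{0,-1\}$). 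Complex conjugation fixes $\{0,-1,\infty\}$ and is thus an anti-conformal isometry of $\mathbb{C}\setminus\{0,-1\}$, so the arc $(0,\infty)$ is a component of its fixed-point locus, hence a complete geodesic that minimizes hyperbolic distance between its points. Therefore
$$d_{\rho_{(0,-1)}}(cr(Q_1),cr(Q_2))=\Big|\int_{cr(Q_1)}^{cr(Q_2)}\rho_{(0,-1)}(t)\,dt\Big|,$$
and it suffices to prove that $\log\Phi$ is, up to sign, a unit-speed parameter for $\rho_{(0,-1)}$ along $(0,\infty)$.

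Next I would uniformize. The map $w=-t$ sends $\mathbb{C}\setminus\{0,-1\}$ isometrically onto the standard thrice-punctured sphere $\mathbb{C}\setminus\{0,1\}$ and carries $(0,\infty)$ to $(-\infty,0)$. Using the elliptic modular function $\Lambda:\mathbb{H}\to\mathbb{C}\setminus\{0,1\}$ with $\Lambda(i\infty)=0$, $\Lambda(0)=1$, $\Lambda(1)=\infty$, the segment $(-\infty,0)$ is the $\Lambda$-image of the edge $\{\tau:\operatorname{Re}\tau=1\}$ of the fundamental ideal triangle, on which $\Lambda$ is injective. Since the Poincar\'e metric of $\mathbb{C}\setminus\{0,1\}$ is the push-forward of that of $\mathbb{H}$, and the reflection $\tau\mapsto 2-\bar\tau$ fixes this vertical line (so that competing lifts under the covering group cannot give a shorter path), the hyperbolic distance between $w_1,w_2\in(-\infty,0)$ equals the $\mathbb{H}$-distance between their lifts $1+i\tau_1,1+i\tau_2$, namely $|\log(\tau_2/\tau_1)|$ with $\tau_j=\operatorname{Im}$ of the lift.

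It then remains to identify $\operatorname{Im}\tau$ with $M$. Normalizing the four points of $Q$ by a M\"obius map to $-1/k,-1,1,1/k$, the Schwarz--Christoffel (elliptic) integral maps $\ud(a,b,c,d)$ onto a rectangle of side lengths $2K(k)$ and $K'(k)$, so $M(Q)=2K(k)/K'(k)$ up to taking the reciprocal, which does not affect $|\log(M_1/M_2)|$; here $k$ is tied to the cross ratio by $cr(Q)=(1-k)^2/(4k)$. Setting $k_0=(1-k)/(1+k)$ gives $cr(Q)=k_0^2/(1-k_0^2)=-\Lambda(1+i\tau)$, and the ascending Landen transformation together with complementation of $K'/K$ yields $\operatorname{Im}\tau=K'(k_0)/K(k_0)=2K(k)/K'(k)=M(Q)$. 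The normalization is confirmed at the symmetric quadruple, where $cr(Q)=1$ forces $k_0=1/\sqrt2$ and hence $M=1$. Combining the three steps gives $d_{\rho_{(0,-1)}}(cr(Q_1),cr(Q_2))=|\log(\tau_2/\tau_1)|=|\log(M(Q_1)/M(Q_2))|$.

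The hard part is the final step: pinning down the exact constant in $\operatorname{Im}\tau=M$ requires careful bookkeeping of the elliptic-integral normalizations and the correct Landen identity, and one must separately certify that the competing lifts under the covering group do not shortcut the vertical geodesic (handled by the reflection/fixed-locus argument above). Everything else is either a conformal-invariance observation or a standard property of the modular function, so I expect the elliptic-function computation to be the only genuinely delicate point.
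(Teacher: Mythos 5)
Your attempt cannot be matched against an internal argument, because the paper does not prove this lemma at all: it is imported wholesale from \cite{wuchong} ("The following lemma was proved in [25]"). So your proof stands or falls on its own, and in fact it essentially stands: the reduction $M(Q)=\Phi(cr(Q))$ is justified exactly as you say (two positively ordered quadruples with equal cross ratio differ by a disk-preserving M\"obius map), the identification of the punctures $0,-1,\infty$ with the three pair-collisions is correct for the convention (\ref{defofcr}), and the delicate elliptic computation checks out: normalizing to $\{-1/k,-1,1,1/k\}$ gives $cr(Q)=(1-k)^2/(4k)$ and $M(Q)=2K(k)/K'(k)$; with $k_0=(1-k)/(1+k)$ one has $cr(Q)=k_0^2/(1-k_0^2)$, the complementary modulus of $k_0$ is $2\sqrt{k}/(1+k)$, and the Landen identities $K\bigl(2\sqrt{k}/(1+k)\bigr)=(1+k)K(k)$, $K'\bigl(2\sqrt{k}/(1+k)\bigr)=\tfrac{1+k}{2}K'(k)$ yield $K'(k_0)/K(k_0)=2K(k)/K'(k)=M(Q)$. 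Combined with $\Lambda(1+iy)=\Lambda(iy)/(\Lambda(iy)-1)$ and $\Lambda\bigl(iK'(m)/K(m)\bigr)=m^2$, this shows that the lift of $-cr(Q)$ to the line $\mathrm{Re}\,\tau=1$ is exactly $1+iM(Q)$, i.e.\ $\log M$ is an arc-length parameter along the geodesic; the identity follows. Your route makes the paper self-contained where it currently defers to a reference, at the cost of the elliptic bookkeeping the citation avoids.

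The one step you should not wave at is the claim that the distance between two points of $(0,\infty)$ is realized along $(0,\infty)$. Being a component of the fixed locus of an anti-conformal involution makes $(0,\infty)$ a complete geodesic, but totally geodesic is strictly weaker than globally minimizing: on a general hyperbolic surface, arcs of fixed-point geodesics of a reflection can badly fail to realize distance (e.g.\ the double of a one-holed torus across a very long boundary geodesic, where "antipodal" points on the fixed curve are uniformly close in the surface). Likewise, noting that $\tau\mapsto 2-\bar{\tau}$ fixes the vertical line only shows the family of shortest connections is reflection-symmetric; it does not exclude a shorter connection to a different deck translate of the lift. The clean repair is a folding argument downstairs rather than a symmetry argument upstairs: given any path $c$ in $\mathbb{C}\setminus\{0,1\}$ joining $w_1,w_2\in(-\infty,0)$, replace each excursion of $c$ into the lower half plane by its complex conjugate; conjugation is an isometry of the hyperbolic metric, so length is preserved, and the folded path lies in the closed upper half plane, which is the homeomorphic image under $\Lambda$ of the closed ideal triangle $\Delta=\{0\le \mathrm{Re}\,\tau\le 1,\ |\tau-\tfrac12|\ge\tfrac12\}$. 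Lifting by $(\Lambda|_{\Delta})^{-1}$ (a local-isometry inverse, so again length-preserving) produces a path in $\mathbb{H}$ from $1+i\tau_1$ to $1+i\tau_2$ themselves, not to a deck translate, whence $\ell(c)\ge d_{\mathbb{H}}(1+i\tau_1,1+i\tau_2)=|\log(\tau_2/\tau_1)|$. That is precisely the no-shortcut statement your second paragraph needs; with it substituted for the fixed-locus remark, your proof is complete.
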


\begin{proof} [Proof of Theorem 4] It is easy to check that $d_Q$
and $d_C$ are symmetric and satisfy the triangle inequality. To
prove  $d_M$ and $d_C$ to be metrics, we only need to check that
$d_M(h_1,h_2)=0$ or $d_C(h_1,h_2)=0$ implies $h_1=h_2.$

We first show that $d_M(h_1,h_2)=0$ implies $d_C(h_1,h_2)=0$. If
on the contrary, $d_C(h_1,h_2)>0$, then there exist $m>0$ and a
quadruple $Q=(a,b,c,d)\in\uc$ with $cr(Q)=1$ such that
$$\frac{cr(h_1(Q))}{cr(h_2(Q))}>1+m.$$ Then Lemma 1 implies
$$|\log\frac{M(h_1(Q))}{M(h_2(Q))}|>0,$$ which is a contradiction
to $d_M(h_1,h_2)=0$.

Now we show that  $d_C(h_1,h_2)=0$ implies $h_1=h_2.$ Let
$A(z)=-\frac{z-i}{z+i}$, which is a M$\mathrm{\ddot{o}}$bius
transformation from $\h$ to $\ud$, and let $\tilde{h}_1=
A^{-1}\circ h_1\circ A$ and $\tilde{h}_2= A^{-1}\circ h_2\circ A$.
Then $d_C(h_1,h_2)=0$ means $
cr(\tilde{h}_1(Q))=cr(\tilde{h}_2(Q))$ for any quadruple
$Q=\{a,b,c,d\}\subset \re$ with $cr(Q)=1$. Since $\tilde{h}_1$ and
$\tilde{h}_2$ fix $0,1$ and $\infty$, by applying the condition $
cr(\tilde{h}_1(Q))=cr(\tilde{h}_2(Q))$ repeatedly on appropriate
quadruples $Q\subset \mathbb{Z}$, we first obtain that
$\tilde{h}_1(x)=\tilde{h}_2(x)$ for all $x\in \mathbb{Z}$. Then by
using quadruples $Q\subset \frac{\mathbb{Z}}{2}$, we conclude that
$\tilde{h}_1(x)=\tilde{h}_2(x)$ for all $x\in
\frac{\mathbb{Z}}{2}$. Inductively,
$\tilde{h}_1(x)=\tilde{h}_2(x)$ for all $x\in
\frac{\mathbb{Z}}{2^n}$ for each $n\in \mathbb{N}$. Since $\cup
_{n=1}^{\infty }\frac{\mathbb{Z}}{2^n}$ is dense in $\re$, it
follows that $\tilde{h}_1(x)=\tilde{h}_2(x)$ for all $x\in \re$;
that is, $h_1=h_2.$
\end{proof}

\begin{proof}[Proof of Theorem 5] Let $h\in T(\ud )$ and let
$\{h_n\}_{n=1}^{\infty }$ be a sequence of points in $T(\ud )$.

At first, since $d_M(h_n,h)\leq d_T(h_n,h)$, it follows that
$d_T(h_n,h)\rightarrow 0$ implies $d_M(h_n,h)\rightarrow 0$ as
$n\rightarrow \infty $.

Secondly, we show that $d_M(h_n,h)\rightarrow 0$ implies
$d_C(h_n,h)\rightarrow 0$ as $n\rightarrow \infty$. If on the
contrary this is not true, then there exists a subsequence of
$\{h_n\}_{n=1}^\infty$, denoted again by $\{h_n\}_{n=1}^\infty$
for simplicity, such that $d_C(h_n,h)>\varepsilon_0$ for all $n$
and some $\varepsilon_0>0$. Thus there is a sequence
$\{Q_n=\{a_n,b_n,c_n,d_n\}\}_{n=1}^\infty\subset\uc$ of quadruples
with $cr(Q_n)=1$, such that for all $n$,
$$\frac{cr(h_n(Q_n))}{cr(h(Q_n))}>e^{\varepsilon_0}.$$  Since
$h$ is a given quasisymmetric homeomorphism and $cr(Q_n)=1$,
$\{cr(h(Q_n))\}_{n=1}^\infty$ stays in a compact subset of the
positive half real axis.  Thus there exists $\varepsilon_0^\prime>0$
such that for all $n$,
$$d_{\rho_{(0,-1)}}(cr(h_n(Q_n),
cr(h(Q_n)))>\varepsilon_0^\prime.$$ Using Lemma 1, we conclude
that for all $n$,
$$|\log\frac{M(h_n(Q_n))}{M(h(Q_n))}| >\varepsilon_0^\prime.$$
This is a contradiction to the assumption that
$d_Q(h_n,h)\rightarrow 0$ as $n\rightarrow \infty $.

Finally, we show that $d_C(h_n,h)\rightarrow 0$ implies
$d_T(h_n,h)\rightarrow 0$ as $n\rightarrow \infty $. If on the
contrary this is not true, then by passing to a subsequence we may
assume that $d_T(h_n,h)>\varepsilon _1$ for all $n$ and some
$\varepsilon _1>0$.

 Let $ex(h_n)$ be the Douady-Earle extension of $h_n$. Since $d_C(h_n,h)\rightarrow
0$ as $n\rightarrow \infty $, there exists $C>0$ such that for all
$n$ and any quadruple $Q$ with $cr(Q)=1$,
$$|\log cr(h_n(Q))|<C.$$
Using the main theorem in \cite{HuMuzician1}, there exists $K_0>1$
such that for all $n$, $$K(ex(h_n))\leq K_0.$$ Thus $\{h_n\}$ is a
bounded sequence in $T(\ud )$. Passing to subsequences, we may
assume that $\{h_n\}_{n=1}^{\infty }$ converges to a
quasisymmetric homeomorphism $h^\prime$ uniformly on $\uc$ and
also converges to a point $h''$ in $T(\ud )$ under the metric
$d_T$. Note that $h'$ and $h''$ fix the same three points as each
$h_n$ does. Using the conclusions in the previous two steps, we
obtain $d_C(h_n, h'')\rightarrow 0$ as $n\rightarrow \infty $.
Then for each quadruple $Q$ with $cr(Q)=1$,
$$cr(h''(Q))=\lim _{n\rightarrow \infty }cr(h_n(Q))=cr(h'(Q)).$$
Therefore, $h'=h''$. Since $d_c(h_n, h)\rightarrow 0$ as
$n\rightarrow \infty $, we concluded that $h=h'=h''$. Thus,
$d_T(h_n, h)\rightarrow 0$ as $n\rightarrow \infty $. This is a
contradiction to the assumption that $d_T(h_n,h)>\varepsilon _1$
for all $n$.

We complete the proof.
\end{proof}

\begin{proof} [Proof of Theorem  6]
Using definitions, it is easy to check that $d_{AM}$ and $d_{AC}$
are symmetric and satisfy the triangle inequality. To prove
$d_{AM}$ and $d_{AC}$ to be metrics, it remains to show that
$d_{AM}([h_1],[h_2])=0$ or $d_{AC}([h_1],[h_2])=0$ implies
$[h_1]=[h_2].$

Using the same argument to show that  $d_M(h_1,h_2)=0$ implies
$d_C(h_1,h_2)=0$ in the proof of Theorem 4, one can see that
$d_{AM}([h_1],[h_2])=0$ implies $d_{AC}([h_1],[h_2])=0$.

In the following, we apply some properties of Douady-Earle
extensions to show that $d_{AC}([h_1],[h_2])=0$ implies
$[h_1]=[h_2].$ Suppose on the contrary that $[h_1]\neq[h_2],$
which means $h_1\circ (h_2)^{-1} $ is not symmetric. Let $ex(h_1)$
and $ex(h_2)$ be the Douady-Earle extensions of $h_1$ and $h_2$
respectively. Using a result of \cite{EarleMarkovicSaric}, we know
that $ex(h_1)\circ (ex(h_2))^{-1}$ is not asymptotic conformal on
$\ud$, which means that there exist a constant $\varepsilon_0>0$
and a sequence $\{D_n\}_{n=1}^\infty$ of hyperbolic disks  in
$\ud$ of diameter $1$ with the Euclidean distance from $D_n$ to
$\uc$ approaching $0$ as $n\rightarrow \infty $ such that
\begin{equation}\label{2-2}
\|\mu_{ex(h_1)}|_{D_n}-\mu_{ex(h_2)}|_{D_n}\|_{L^\infty}\ge
\varepsilon_0
\end{equation}
for all $n$. Let $D_0$ be the hyperbolic disk on $\ud$ of diameter 1
and centered in $0$, and assume that $\gamma_n\in M\ddot{o}b(\ud)$
and $\gamma_n(D_0)=D_n$, where  $ M\ddot{o}b(\mathbb{D})$ is the
group of all M$\mathrm{\ddot{o}}$bius transformation preserving $\ud
$. Let $A_{1n}$ and $A_{2n}$
 $\in M\ddot{o}b(\ud)$ such that $A_{1n}\circ h_1\circ \gamma_n$ and
 $A_{2n}\circ h_2\circ \gamma_n$ fix $1,-1,i$ for all $n$. Given
 any quadruple $Q\in \uc$ with $cr(Q)=1$, from the fact that $D_n$
 converges to the boundary $\uc $, one can see that
 $s(\gamma_n(Q))\rightarrow 0$ as $n\rightarrow \infty $. Using the
 assumption that
 $d_{AC}([h_1],[h_2])=0$, we obtain
 \begin{equation}\label{2-3}
 \lim_{n\rightarrow \infty}\frac{cr(A_{1n}\circ h_1\circ \gamma_n(Q))}{cr(A_{2n}
 \circ h_2\circ \gamma_n(Q))}=1.
 \end{equation}
 Let $ex(A_{1n}\circ h_1\circ \gamma_n)$ and  $ex(A_{2n}\circ
h_2\circ
 \gamma_n)$ be the Douady-Earle extensions
of $A_{1n}\circ h_1\circ \gamma_n$ and $A_{2n}\circ h_2\circ
 \gamma_n$ respectively. Since these quasiconformal mappings fix
 three common points and have constant maximal dilatations,
 passing to subsequences, we may assume that they converge to
 two quasisymmetric homeomorphisms $\widetilde{h}_1$ and $\widetilde{h}_2$ uniformly on  $\uc$.
Using (\ref{2-3}) and the convergences, we obtain
$$cr(\widetilde{h}_1(Q))=cr(\widetilde{h}_2(Q)) $$
for any quadruple $Q\in \uc$ with $cr(Q)=1$. Using the fact
$\widetilde{h}_1$ equals to $\widetilde{h}_2$  at three fixed
point and Theorem 4,  we conclude that
$\widetilde{h}_1=\widetilde{h}_2$. Using a convergence property of
Douady-Earle extensions, $Belt(ex(A_{1n}\circ h_1\circ \gamma_n))$
and $Belt(ex(A_{2n}\circ h_2\circ \gamma_n))$ converge to
$Belt(ex(\widetilde{h}_1))$ and $Belt(ex(\widetilde{h}_2))$
uniformly on $D_0$ respectively; that is,
\begin{equation}\label{2-4}
||\mu_{ex(A_{1n}\circ h_1\circ
\gamma_n)}|_{D_0}-\mu_{ex(A_{2n}\circ h_2\circ
\gamma_n)}|_{D_0}||_{L^\infty }\rightarrow 0.
\end{equation}
On the other hand, by (\ref{2-2}) and  the conformal naturality of
Douady-Earle extensions,
\begin{eqnarray*}
&  & ||\mu_{ex(A_{1n}\circ h_1\circ
\gamma_n)}|_{D_0}-\mu_{ex(A_{2n}\circ h_2\circ
\gamma_n)}|_{D_0}||_{L^\infty } \\
& = &||\mu_{ex(h_1)}|_{D_n}-\mu_{ex(h_2)}|_{D_n}||_{L^\infty }\geq
\varepsilon_0.
\end{eqnarray*}
This is a contradiction to (\ref{2-4}). Thus, $h_1\circ (h_2)^{-1}
$ is symmetric and hence $[h_1]=[h_2]$.
\end{proof}

\begin{lemma}
 \cite{HuMuzician2} Let $h$ be an orientation-preserving
homeomorphism of $\uc$ and $ex(h)$ be the Douady-Earle extension
of $h$ to the closed unit disk $\ud$. Let $p\in \uc$ and $I_p$ be
an open arc on $\uc$ containing $p$ and symmetric with respect to
$p$. Assume that
$$\|h|_{I_p}\|_{cr}=\sup_{cr(Q)=1}|\log cr (h(Q))|<\infty,$$ where
the supremum is over all quadruple $Q\in I.$ Then there exists an
open hyperbolic half plane $U_p$ with $p$ at the middle of its
boundary on $\uc$ such that $$\log K(ex(h)|U_p)\leq
C_1\|h|_{I_p}\|_{cr} + C_2$$ for two universal positive constants
$C_1$ and $C_2$, where $K(ex(h)|U_p)$ is the maximal dilatation of
$ex(h)$ on $U_p$.
\end{lemma}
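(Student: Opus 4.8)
The plan is to bound the pointwise dilatation of $ex(h)$ at an arbitrary point $z\in U_p$ by transporting the estimate to the origin through the conformal naturality of the Douady-Earle extension, and then to control the Beltrami coefficient at the origin by means of the explicit conformal-barycenter formula. First I would fix the half-plane: let $U_p$ be the hyperbolic half-plane bounded by the geodesic whose endpoints are the endpoints of a sub-arc of $I_p$, taken on the side whose boundary arc on $\uc$ is centered at $p$. For each $z\in U_p$ I choose $\gamma_z\in\mob$ with $\gamma_z(0)=z$, and then $A_z\in\mob$ so that $g:=A_z\circ h\circ\gamma_z$ satisfies $ex(g)(0)=0$. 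Since elements of $\mob$ are conformal, conformal naturality ($ex(A\circ h\circ\gamma)=A\circ ex(h)\circ\gamma$) gives $|\mu_{ex(h)}(z)|=|\mu_{ex(g)}(0)|$. Thus it suffices to bound $|\mu_{ex(g)}(0)|$ uniformly over $z\in U_p$ by a quantity of the form $1-c\,e^{-C\|h|_{I_p}\|_{cr}}$.

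Next I would record the barycenter formula at the origin. The value $ex(g)(w)$ at an interior point is the conformal barycenter of the push-forward of harmonic measure, defined by the equation $\int_{\uc}\frac{g(\zeta)-w}{1-\bar w\,g(\zeta)}\,|d\zeta|=0$; the normalization $ex(g)(0)=0$ is exactly the condition $\int_{\uc}g(\zeta)\,|d\zeta|=0$. Applying the implicit function theorem to this barycenter equation produces explicit integral expressions for $\partial\,ex(g)(0)$ and $\bar\partial\,ex(g)(0)$ as integrals of $g$ and $\overline{g}$ against fixed trigonometric kernels over $\uc$, so that $\mu_{ex(g)}(0)=\bar\partial\,ex(g)(0)/\partial\,ex(g)(0)$. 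A short inspection of these formulas shows that $|\mu_{ex(g)}(0)|$ can approach $1$ only when the measure $g_*(|d\zeta|/2\pi)$ degenerates, i.e.\ concentrates its mass near a proper closed subarc of $\uc$. The whole estimate therefore reduces to a quantitative non-degeneracy statement for this measure.

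This is where the geometry of $U_p$ and the cross-ratio hypothesis enter. Because $z\in U_p$, the map $\gamma_z$ carries the bulk of the harmonic mass seen from the origin into the controlled arc $I_p$; equivalently, $\gamma_z^{-1}(\uc\setminus I_p)$ is a short arc whose length tends to $0$ as $z\to p$, uniformly on $U_p$. On the good part, the finiteness of $\|h|_{I_p}\|_{cr}$ translates, through the standard equivalence between bounded cross-ratio distortion and quasisymmetry together with Lemma~\ref{wuchong}, into a quantitative lower bound on the spread of $g_*(|d\zeta|/2\pi)$, which yields $|\mu_{ex(g)}(0)|\le 1-c\,e^{-C\|h|_{I_p}\|_{cr}}$. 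The contribution of the complementary arc $\uc\setminus I_p$, although uncontrolled in shape, carries only a small and uniformly bounded harmonic mass while its image stays on the bounded circle $\uc$; hence it perturbs the barycenter integrals by a controlled amount, and this is precisely what produces the additive constant $C_2$. Taking logarithms in $K=(1+|\mu|)/(1-|\mu|)$ then gives $\log K(ex(h)|U_p)\le C_1\|h|_{I_p}\|_{cr}+C_2$.

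The hard part will be the quantitative step of the third paragraph: converting the cross-ratio bound on $I_p$ into an effective lower bound for the spread of the normalized push-forward measure that is \emph{uniform} over all $z\in U_p$, while simultaneously absorbing the genuinely uncontrolled behaviour of $h$ on $\uc\setminus I_p$ into a single additive constant. The delicate point is that $ex(g)(0)$ depends on all of $g$, so one cannot simply discard the complementary arc; one must show that its small harmonic mass renders its influence on both the normalization $ex(g)(0)=0$ and on the derivative integrals negligible, uniformly as $z$ ranges over $U_p$. Establishing this uniformity — rather than a bound that degrades as $z\to p$ — is the heart of the argument.
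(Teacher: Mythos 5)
The paper never proves this lemma---it is quoted from \cite{HuMuzician2}---so the only in-paper material to measure your plan against is the closely analogous Lemma~\ref{newlemma} in Section~3 and its proof, which contains exactly the mechanism your outline lacks. Your first two steps (reduction to the origin by conformal naturality, and the barycenter/implicit-function formulas giving $\mu_{ex(g)}(0)$ in terms of the integrals $c_{\pm1}$, $d_{-1}$) are correct and standard. The gap is the central claim of your third paragraph: that finiteness of $\|h|_{I_p}\|_{cr}$ ``translates into a quantitative lower bound on the spread of $g_*(|d\zeta|/2\pi)$.'' That is false as stated: bounded---indeed vanishing---cross-ratio distortion is compatible with total concentration of the push-forward, since a M\"obius transformation of $\ud$ restricted to an arc has zero distortion yet can compress that arc into one of arbitrarily small length. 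Whatever spread the good part has must be extracted from the \emph{interplay} of the cross-ratio bound with the barycenter condition $\int_{\uc}g(\zeta)\,|d\zeta|=0$ and with the connectedness of the controlled arc; in your outline the normalization appears only as a side condition to be perturbed, and connectedness of $I_p$ is never used anywhere.

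Worse, your perturbative treatment of $\uc\setminus I_p$ (``small harmonic mass $\Rightarrow$ controlled perturbation of the barycenter integrals, producing $C_2$'') is refuted by the counter-example in Section~3 of this very paper. The maps $h_n$ there satisfy $ex(h_n)(0)=0$; outside two arcs of total length $2\pi/n$ around $\pm i$ they are linear in the angle, hence have cross-ratio norm bounded by a universal constant on that complement; so the ``uncontrolled'' set has harmonic mass $1/n\rightarrow 0$ as seen from the origin---and yet $|\mu_{ex(h_n)}(0)|\rightarrow 1$. Thus a bad set of arbitrarily small harmonic mass can drag the Beltrami coefficient at the barycenter all the way to $1$: since $|c_1+d_{-1}\overline{c}_{-1}|^2-|c_{-1}+d_{-1}\overline{c}_1|^2=(|c_1|^2-|c_{-1}|^2)(1-|d_{-1}|^2)$, the bound on $|\mu_{ex(g)}(0)|$ rests on a near-cancelling margin, and an $O(\epsilon_0)$ change in these integrals cannot be absorbed into a margin that is itself exponentially small in $M$. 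What saves the actual lemma---and what a mass-perturbation argument is structurally blind to, because it never distinguishes one good arc from two---is the following chain: (i) concentration of the image of a single arc of almost full measure contradicts the barycenter condition (the mechanism of Step 1 of the proof of Lemma 1 in \cite{HuMuzician1}, quoted in the paper's proof of Lemma~\ref{newlemma}), giving an initial lower bound on the length of the image of the good arc; (ii) cross-ratio bounds on finitely many quadruples with $cr(Q)=1$ lying \emph{inside} the good arc propagate this initial bound down to fixed sub-arcs, exactly as in Steps 1--3 of the proof of Lemma~\ref{newlemma}; (iii) Lemma 3.6 of \cite{Markovic} converts those image-length bounds into the dilatation bound, and it is these sine estimates---not any perturbation by the bad arc---that produce the form $C_1\|h|_{I_p}\|_{cr}+C_2$; (iv) uniformity over $z\in U_p$ is arranged by taking the shadow of $U_p$ deep inside $I_p$, so that the transported quadruples $\gamma_z(Q)$ stay in $I_p$ and still have cross-ratio $1$ by M\"obius invariance. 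Without steps (i)--(ii) your plan cannot distinguish the true single-arc statement from the false two-arc one, and that is a genuine gap, not a technicality.
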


\begin{proof} [Proof of Theorem 7] Let $[h]\in AT(\ud )$ and
$\{[h_n]\}_{n=1}^{\infty }$ be a sequence of points in $AT(\ud)$.
We prove the equivalence of the topologies induced by the three
metrics $d_{AT}$, $d_{AM}$ and $d_{AC}$ in the same order as we
show the equivalence of the topologies induced by $d_{T}$, $d_M$
and $d_C$ in Theorem 5.

At first, as pointed out in the introduction,
$d_{AM}([h_n],[h])\leq d_{AT}([h_n],[h])$. Thus,
$d_{AT}([h_n],[h])\rightarrow 0$ implies
$d_{AM}([h_n],[h])\rightarrow 0$ as $n\rightarrow \infty $.

Secondly, we show $d_{AM}([h_n],[h])\rightarrow 0$ implies
$d_{AC}([h_n],[h])\rightarrow 0$ as $n\rightarrow \infty $. If on
the contrary this is not true, then there exists a subsequence of
$\{[h_n]\}_{n=1}^\infty$, denoted again by
$\{[h_n]\}_{n=1}^\infty$ for simplicity, such that
$d_{AC}([h_n],[h])>\varepsilon_0$ for all $n$ and some
$\varepsilon_0>0$. Then, for each $n$, there is a degenerating
sequence
$\{Q_{n_j}=\{a_{n_j},b_{n_j},c_{n_j},d_{n_j}\}\}_{j=1}^\infty$ of
quadruples  and $\varepsilon_0>0$ such that, for each $n$,
$$\lim_{j\rightarrow \infty} \frac{cr(h_n(Q_{n_j}))}{cr(h(Q_{n_j}))}>e^{\varepsilon_0}.$$
Thus, we can choose a degenerating sequence
$\{Q_{n}=\{a_{n},b_{n},c_{n},d_{n}\}\}_{n=1}^\infty$ of quadruples
such that for each $n$,
$$ \frac{cr(h_n(Q_{n}))}{cr(h(Q_{n}))}>e^{\varepsilon_0}.$$
Using the same argument in the second step in the proof of Theorem
5, we obtain
$$d_{\rho_{(0,-1)}}(cr(h_n(Q_n),
cr(h(Q_n)))>\varepsilon_0^\prime$$ for all $n$ and some
$\varepsilon_0^\prime>0$. By Lemma 1, for all $n$,
$$|\log\frac{M(h_n(Q_n))}{M(h(Q_n))}| >\varepsilon_0^\prime,$$
which is a contradiction to the assumption that
$d_{AQ}([h_n],[h])\rightarrow 0$ as $n\rightarrow \infty $.

Finally, we show that $d_{AC}([h_n],[h])\rightarrow 0$ implies
$d_{AT}([h_n],[h])\rightarrow 0$. Suppose on the contrary that
$d_{AT}([h_n],[h])\nrightarrow 0$. Passing to a subsequence, we
may assume that, for all $n$,
\begin{equation}\label{2-5}
d_{AT}([h_n],[h])>\varepsilon>0.
\end{equation}
Let $ex(h_n)$ and $ex(h)$ be the Douady-Earle extensions of $h_n$
and $h$ respectively.  Using (\ref{2-5}), we can choose  a
sequence of hyperbolic disks $\{D_n\}_{n=1}^\infty\subset\ud$ of
diameter $1$ with the Euclidean distance from $D_n$ to $\uc$
approaching $0$ as $n\rightarrow \infty $ such that, for all $n$,
\begin{equation}\label{2-6}
\|\mu_{ex(h_n)}|_{D_n}-\mu_{ex(h)}|_{D_n}\|_{L^\infty}\ge
\epsilon.
\end{equation}
 Let $D_0$ be the hyperbolic disk on $\ud$ of diameter 1 and
centered in $0$, and assume that $\gamma_n\in M\ddot{o}b(\ud)$ and
$\gamma_n(D_0)=D_n$. Let $A_{1n}$ and $A_{2n}$
 $\in M\ddot{o}b(\ud)$ such that $A_{1n}\circ h_n\circ \gamma_n$ and
 $A_{2n}\circ h\circ \gamma_n$ fix $1,-1,i$ for all $n$.
Now given a quadruple $Q$ with $cr(Q)=1$, one can check that
$s(\gamma (Q))\rightarrow 0$ as $n\rightarrow \infty $. Using the
assumption that $d_{AC}([h_n],[h])\rightarrow 0$ as $n\rightarrow
\infty $, we obtain
 \begin{equation}\label{2-7}
 \lim_{n\rightarrow \infty}|\frac{cr(A_{1n}\circ h_n\circ \gamma_n(Q))}{cr(A_{2n}\circ h\circ
 \gamma_n(Q))}|=1.
 \end{equation}

Since $d_{AC}([h_n],[h])\rightarrow 0$ as $n\rightarrow \infty$,
$\{d_{AC}([h_n],[Id])\}_{n=1}^\infty$ is bounded. Using the
definition of $d_{AC}$ and Lemma 2, we know that
$\{H(ex(h_n))\}_{n=1}^\infty$ is bounded. For each $n$, we can
choose $0<r_n<1$ such that
$\frac{1+\|\mu_n\|_\infty}{1-\|\mu_n\|_\infty}\leq H(ex(h_n))+1$,
where $\mu_n$ is defined as
$$
\mu_n(z)=\left\{\begin{array}{ll}\displaystyle
                    \mu_{ex(h_n)}(z), & r_n\leq|z|<1,\\
                    0, & \ \ |z|<r_n.
              \end{array}\right.$$
Let $f_n$ be the normalized (i.e., fixing three points $1$, $-1$
and $i$) quasiconformal homeomorphism of $\ud$ with the Beltrami
coefficient $\mu_n$, and $\widetilde{h}_n=f_n|_{\uc}$. Then
$\widetilde{h}_n\in[h]$ for each $n$ and
$\{K(ex(\widetilde{h}_n))\}_{n=1}^\infty$ is bounded. For
simplicity, we denote $\{[\widetilde{h}_n]\}_{n=1}^\infty$ by
$\{[h_n]\}_{n=1}^\infty$. In other words, by replacing
representatives of $[h_n]'s$, we may assume that
$\{K(ex(h_n))\}_{n=1}^\infty$ is bounded.

Since all $A_{1n}\circ h_n\circ \gamma_n$ and $A_{2n}\circ h\circ
\gamma_n$ are normalized to fix three points,
 passing to subsequences we may assume that $A_{1n}\circ h_n\circ \gamma_n$ and
$A_{2n}\circ h\circ \gamma_n$ converge uniformly to quasisymmetric
maps $h^\ast$ and $\widehat{h}^\ast$ respectively on $\uc$. Using
(\ref{2-7}) and the convergences, we obtain
$$ cr(h^\ast(Q))=cr(\widehat{h}^\ast(Q)) $$
for any quadruple $Q$ with $cr(Q)=1$. Thus, $h^\ast=
\widehat{h}^\ast$. By a convergence property of Douady-Earle
extensions, we obtain
\begin{equation}\label{2-8}
||\mu_{ex(A_{1n}\circ h_n\circ
\gamma_n)}|_{D_0}-\mu_{ex(A_{2n}\circ h\circ
\gamma_n)}|_{D_0}||_{L^\infty }\rightarrow 0.
\end{equation}
On the other hand, (\ref{2-6}) and  the conformal naturality of
Douady-Earle extensions imply that for each $n$,
\begin{eqnarray*}
 & & ||\mu_{ex(A_{1n}\circ h_n\circ
\gamma_n)}|_{D_0}-\mu_{ex(A_{2n}\circ h\circ
\gamma_n)}|_{D_0}||_{L^\infty } \\
& = & ||\mu_{ex(h_n)}|_{D_n}-\mu_{ex(h)}|_{D_n}||_{L^\infty }\geq
\epsilon
\end{eqnarray*}
This is a contradiction to (\ref{2-8}). Thus,
$d_{AC}([h_n],[h])\rightarrow 0$ implies
$d_{AT}([h_n],[h])\rightarrow 0$ as $n\rightarrow \infty $.

\end{proof}

\section{A counter-example and a new lemma}

In this section, we construct a sequence $\{h_n\}_{n=2}^{\infty }$
of orientation-preserving homeomorphisms of $\uc $ such that for
each $n\ge 2$, $h_n$ fixes four points $\pm 1$ and $\pm i$ and the
Douady-Earle extension $ex(h_n)$ of $h_n$ fixes the origin, but
the maximal dilatation $K(ex(h_n))(0)$ of $ex(h_n)$ at the origin
goes to $\infty $ as $n\rightarrow \infty $. Let $A$ be the
M\"obius transformation from the unit disk $\ud $ to the upper
half plane $\h$ mapping $-1$, $-i$ and $1$ to $-1$, $0$ and $1$
respectively. Then the sequence $\{A\circ f_n\circ
A^{-1}\}_{n=2}^{\infty }$ is a counter-example to Lemma 2.2 in
\cite{Saric2} (the statement of this lemma is recalled in the
introduction).

 Given each $n\ge 2$, we
first define the map $h_n$ on the circular arc of $\uc $ in the
first quadrant; that is, $h_n$ maps the circular arc from $1$ to
$e^{\frac{\pi}{2}[1-\frac{1}{n}]i}$ linearly onto the arc from $1$
to $e^{\frac{\pi}{2n}i}$ and maps the arc from
$e^{\frac{\pi}{2}[1-\frac{1}{n}]i}$ to $i$ linearly onto the arc
from $e^{\frac{\pi}{2n}i}$ to $i$. Clearly, both $1$ and $i$ are
fixed under $h_n$ and $e^{\frac{\pi}{2}[1-\frac{1}{n}]i}$ is
mapped to $e^{\frac{\pi}{2n}i}$. Through conjugation under taking
conjugacies of complex numbers, $h_n$ is extended to be defined on
the arc of $\uc $ in the fourth quadrant. Finally, through
conjugation under taking mirror images with respect to the
imaginary axis, $h_n$ is extended to be defined on $\uc $. Note
that $h_2$ is equal to the identity map on $\uc $. Since each
$h_n, n\ge 2$, is symmetric with respect to the origin, using
definition we see that the Douady-Earle extension $ex(h_n)$ fixes
the origin. The main work is to show that
\begin{equation}\label{goesto1}
|Belt(ex(h_n))(0)|\rightarrow 1 \;\;{\rm as }\;\;n\rightarrow
\infty .
\end{equation}

Before starting to show this convergence, let us first recall some
formulas to express the Beltrami coefficient of the Douady-Earle
extension at the origin.

Let $h$ be an orientation-preserving homeomorphism of $\uc $. Assume
that the Douady-Earle extension $ex(h)$ is normalized at the origin;
that is, $ex(h)(0)=0$. Then the Beltrami coefficient of $ex(h)$ at
the origin can be explicitly expressed as follows
\cite{DouadyEarle}. Given any point $z\in \ud $, $ex(h)(z)$ is equal
to the unique point $w\in \ud $ such that
$$F(z, w)=0,$$ where
\begin{equation}\label{zw-equation}
F(z, w)=\frac{1}{2\pi }\int _{\uc }\frac{h(\xi )-w}{1-\bar{w}h(\xi
)}\cdot \frac{1-|z|^2}{|z-\xi |^2}|d\xi |.
\end{equation}
If we let
\begin{equation}\label{partialFz}
c_1=\frac{\partial F}{\partial z} (0, 0)=\frac{1}{2\pi }\int _{\uc
}\bar {\xi }h(\xi )|d\xi |,\;\;c_{-1}=\frac{\partial F}{\partial
\bar{z}} (0, 0)=\frac{1}{2\pi }\int _{\uc }\xi h(\xi )|d\xi |
\end{equation}
and
\begin{equation}\label{partialFw}
d_1=\frac{\partial F}{\partial w} (0,
0)=-1,\;\;d_{-1}=\frac{\partial F}{\partial \bar{w}} (0,
0)=\frac{1}{2\pi }\int _{\uc }h(\xi )^2|d\xi |,
\end{equation}
then
\begin{equation}\label{partialPhizbar}
\frac{\partial w}{\partial
\bar{z}}(0)=-\frac{\overline{\frac{\partial F}{\partial
w}(0,0)}\frac{\partial F}{\partial \bar{z}}(0, 0)-\frac{\partial
F}{\partial \bar{w}}(0,0)\overline{\frac{\partial F}{\partial
z}(0,0)}}{|\frac{\partial F}{\partial w}(0, 0)|^2-|\frac{\partial
F}{\partial \bar{w}}(0,
0)|^2}=\frac{c_{-1}+d_{-1}\overline{c}_1}{1-|d_{-1}|^2}.
\end{equation}
and
\begin{equation}\label{partialPhiz}
\frac{\partial w}{\partial z}(0)=-\frac{\overline{\frac{\partial
F}{\partial w}(0,0)}\frac{\partial F}{\partial z}(0,
0)-\frac{\partial F}{\partial
\bar{w}}(0,0)\overline{\frac{\partial F}{\partial \bar{z}}(0,
0)}}{|\frac{\partial F}{\partial w}(0, 0)|^2-|\frac{\partial
F}{\partial \bar{w}}(0,
0)|^2}=\frac{c_{1}+d_{-1}\overline{c}_{-1}}{1-|d_{-1}|^2}.
\end{equation}
Therefore the Beltrami coefficient $Belt(ex(h))$ at $0$ is equal to
\begin{equation}\label{Beltramicoeffi}
Belt(ex(h))(0)=\frac{c_{-1}+d_{-1}\overline{c}_1}{c_{1}+d_{-1}\overline{c}_{-1}}.
\end{equation}

If the boundary homeomorphism $h$ satisfies $h(\overline{\xi
})=\overline{h(\xi )}$ and $h(-\overline{\xi })=-\overline{h(\xi
)}$, then one can rewrite
\begin{equation}\label{partialFz-in-special-case}
c_1=\frac{2}{\pi }Re\int _0^{\frac{\pi}{2}}\bar {\xi }h(\xi )|d\xi
|,\;\;c_{-1}=\frac{2}{\pi }Re\int _0^{\frac{\pi}{2}}\xi h(\xi )|d\xi
|
\end{equation}
and
\begin{equation}\label{partialFw-in-special-case}
d_{-1}=\frac{2}{\pi }Re\int _0^{\frac{\pi}{2}}h^2(\xi )|d\xi |.
\end{equation}

Now let $h=h_n$. In the following, we show that both $c_1$ and
$c_{-1}$ converge to $\frac{2}{\pi}$ and $d_{-1}$ converges to $1$
as $n\rightarrow \infty $. It follows that $Belt(ex(h_n))(0)$
converges to $1$ as $n\rightarrow \infty $.

We first show that $c_1$ converges to $\frac{2}{\pi}$ as
$n\rightarrow \infty $.

Let $\theta _n=\frac{\pi }{2n}$, where $n\ge 2$. Then for each
$\xi =e^{i\theta }$ with $0\le \theta \le \frac{\pi }{2}-\theta
_n$, $h(\xi )=1+O(\frac{1}{n})$. Clearly,
$$\int _0^{\frac{\pi}{2}}\bar {\xi }h(\xi )|d\xi
|=\int _0^{\frac{\pi}{2}-\theta _n}\bar {\xi }h(\xi )|d\xi |+\int
_{\frac{\pi}{2}-\theta _n}^{\frac{\pi}{2}}\bar {\xi }h(\xi )|d\xi
|.$$ Then
\begin{eqnarray*}
\int _0^{\frac{\pi}{2}-\theta _n}\bar {\xi }h(\xi )|d\xi | & =
&\int _0^{\frac{\pi}{2}-\theta _n}\bar {\xi
}(1+O(\frac{1}{n}))|d\xi |=\int _0^{\frac{\pi}{2}-\theta _n}\bar
{\xi }|d\xi |+O(\frac{1}{n}) \\
& = &\int _0^{\frac{\pi}{2}}\bar {\xi }|d\xi
|+O(\frac{1}{n})=1-i+O(\frac{1}{n}) \end{eqnarray*} and
$$|\int
_{\frac{\pi}{2}-\theta _n}^{\frac{\pi}{2}}\bar {\xi }h(\xi )|d\xi
||\le \int _{\frac{\pi}{2}-\theta _n}^{\frac{\pi}{2}}|\bar {\xi
}h(\xi )||d\xi |=\theta _n=O(\frac{1}{n}).$$ Thus
$$\int _0^{\frac{\pi}{2}}\bar {\xi }h(\xi )|d\xi
|=1-i+O(\frac{1}{n})$$ and then
$$c_1=\frac{2}{\pi }+O(\frac{1}{n}).$$
It follows that $c_1$ converges to $\frac{2}{\pi}$ as
$n\rightarrow \infty $.

Similarly, one can show that $c_{-1}$ converges to $\frac{2}{\pi}$
as $n\rightarrow \infty $.

It remains to show that $d_{-1}$ converges to $1$ as $n\rightarrow
\infty $. Since for each $\xi =e^{i\theta }$ with $0\le \theta \le
\frac{\pi }{2}-\theta _n$, $h(\xi )=1+O(\frac{1}{n})$ implies
$h^2(\xi )=1+O(\frac{1}{n})$. Then
\begin{eqnarray*}
d_{-1} & = & \frac{2}{\pi }Re(\int _0^{\frac{\pi}{2}-\theta
_n}h^2(\xi )|d\xi |+\int _{\frac{\pi}{2}-\theta
_n}^{\frac{\pi}{2}}h^2(\xi )|d\xi |)\\
& = & \frac{2}{\pi }Re (\int _0^{\frac{\pi}{2}-\theta
_n}(1+O(\frac{1}{n}))|d\xi |+O(\frac{1}{n}))=\frac{2}{\pi }Re
(\frac{\pi }{2}+O(\frac{1}{n}))=1+O(\frac{1}{n}).
\end{eqnarray*}
Thus $d_{-1}$ converges to $1$ as $n\rightarrow \infty $. We
complete the proof of (\ref{goesto1}).

In the second half of this section, we introduce a new lemma which
is used in the next section to prove Theorem
\ref{improvedversion}. This bridges the gap in the proof of
Theorem A in \cite{Saric2}.

\begin{lemma}\label{newlemma}
Let $M>0$ and $h$ be an orientation-preserving homeomorphism of $\uc
$. Assume that $p_1=\frac{3+4i}{5}$, $p_2=-\bar{p}$, $p_3=-p$ and
$p_4=\bar{p}$. If $|\log cr(h(Q))|<M$ holds for any of the following
five quadruples
$$Q_0=\{-1, -i, 1, i\}, Q_1=\{-1, 1, p_1, i\}, Q_2=\{-1, 1, i, p_2\},$$
$$Q_4=\{-1, p_3, -i, 1\}\;{\rm and}\;Q_4=\{-1, -i, p_4, 1\},$$
then there exist a universal small neighborhood $U$ of the origin
and a constant $C>0$, only depending on $U$ and $M$, such that
$$K(ex(h)|_U)<C.$$
\end{lemma}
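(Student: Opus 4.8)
The plan is to reduce the statement to an application of Lemma 2 (the \cite{HuMuzician2} semi-local estimate recalled above), the key point being that the five prescribed quadruples pin down, in a quantitative way, enough geometry of $h$ near \emph{every} boundary point of a small fixed arc so that the conformal naturality of the Douady-Earle extension can be leveraged. First I would observe that the hypothesis bounds $|\log cr(h(Q_i))|$ only for five \emph{specific} quadruples, not for a whole family; so the first task is to upgrade these five bounds to a cross-ratio norm $\|h|_{I_p}\|_{cr}<\infty$ over a genuine arc $I_p$. The role of the five quadruples is exactly to control the four ``boundary-behavior'' parameters that feed into the explicit formula \eqref{Beltramicoeffi} for $Belt(ex(h))(0)$: the quadruple $Q_0=\{-1,-i,1,i\}$ controls the symmetry/placement of the images of the four cardinal points, and $Q_1,Q_2,Q_3,Q_4$ (built from $p_1=\tfrac{3+4i}{5}$ and its reflections $p_2,p_3,p_4$) control the distribution of $h$ in each of the four quadrant-arcs between consecutive cardinal points. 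Concretely, I would show that a bound $|\log cr(h(Q_i))|<M$ forces each $h(p_j)$ to lie in a compact subarc of its quadrant bounded away from the endpoints, with the bound depending only on $M$.

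The heart of the argument is then to transfer this into a uniform bound on the Beltrami coefficient of $ex(h)$ on a fixed neighborhood $U$ of the origin. Here I would use the quantities $c_1,c_{-1},d_{-1}$ from \eqref{partialFz}--\eqref{partialFw} together with \eqref{Beltramicoeffi}. The counter-example constructed in the first half of this section shows precisely that one cannot expect $|Belt(ex(h))(0)|$ to stay bounded away from $1$ under the single normalization $-c_0\le h(-1)\le -1/c_0$; the extra four cross-ratio bounds $Q_1,\dots,Q_4$ are designed to rule out the degeneration that drives that example, namely the phenomenon in which the image arc in each quadrant collapses toward one endpoint (there $h$ pushes the mass of $e^{\frac{\pi}{2}[1-1/n]i}$ down to $e^{\frac{\pi}{2n}i}$, sending $d_{-1}\to1$). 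Thus I would argue by contradiction along a sequence $\{h_n\}$ satisfying the five bounds with the \emph{same} constant $M$ but with $K(ex(h_n)|_U)\to\infty$. By the five bounds, the normalized images $h_n(Q_i)$ stay in compact families, so passing to a subsequence I may assume $h_n$ converges; the five limiting cross-ratio values are finite and determined, so the limit $h_\infty$ cannot exhibit the collapsing behavior, whence $|Belt(ex(h_\infty))(0)|<1$ strictly. By the convergence property of Douady-Earle extensions (the same one invoked in the proofs of Theorems 6 and 7), $Belt(ex(h_n))$ converges to $Belt(ex(h_\infty))$ uniformly on a fixed neighborhood of $0$, contradicting $K(ex(h_n)|_U)\to\infty$.

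Alternatively, and more in the spirit of the semi-local Lemma 2, I would first apply Lemma 2 at the point $p=-1$ (whose image is controlled by $Q_0$ combined with the fixed-point normalization), obtaining a half-plane $U_{-1}$ on which $\log K(ex(h)|_{U_{-1}})\le C_1\|h|_{I_{-1}}\|_{cr}+C_2$; the four auxiliary quadruples are what guarantee $\|h|_{I_{-1}}\|_{cr}$ is finite and bounded by a function of $M$ alone, since they control the cross-ratio distortion of $h$ on quadruples drawn from the arc $I_{-1}$. The intersection of $U_{-1}$ with a small disk about the origin then furnishes the desired $U$, and the bound $\log K(ex(h)|_U)\le C_1\cdot C(M)+C_2=:C$ gives the claim.

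\medskip
\noindent\emph{Main obstacle.} The hard part will be the first step: converting the five pointwise cross-ratio bounds into a genuine estimate on $\|h|_{I_p}\|_{cr}$ over an arc $I_p$, i.e.\ controlling $cr(h(Q))$ for \emph{all} quadruples $Q$ in the arc, not just the five chosen ones. This is exactly the subtlety the counter-example exposes, and it requires a careful choice of $p_1=\tfrac{3+4i}{5}$ and its reflections so that the five quadruples ``separate'' the four quadrant-arcs and prevent the collapsing degeneration; verifying that these five suffice (and that the resulting bound depends only on $M$ and $U$) is where the essential work lies.
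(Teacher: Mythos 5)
Both routes you propose contain the same genuine gap, in two different guises. Your first route --- upgrading the five cross-ratio bounds to a bound on $\|h|_{I_p}\|_{cr}$ over a genuine arc so that Lemma 2 applies --- is not merely ``the hard part,'' it is impossible: the five bounds constrain only the images of the eight points $\pm 1,\pm i, p_1,\dots ,p_4$, while $h$ may be arbitrarily non-quasisymmetric inside each of the eight sub-arcs between consecutive special points without violating any of the five inequalities, so no bound on the cross-ratio norm over an arc can follow, and Lemma 2 (whose conclusion concerns a half-plane touching the boundary at $p$, and whose hypothesis genuinely requires control of \emph{all} quadruples near $p$) is the wrong tool. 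This is precisely why the conclusion of the lemma is confined to a neighborhood $U$ of the origin: only in the interior can finitely many cross-ratio constraints control the dilatation. The key external ingredient in the paper's proof is instead Lemma 3.6 of \cite{Markovic}, which bounds $K(ex(h)|_U)$ near the origin once one knows that the image under $h$ of every circular arc of length between $s_0$ and $\pi$ has length bounded below by some $\epsilon(M)>0$; that hypothesis requires no control of $h$ inside the sub-arcs.

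Your compactness route has the same hole in a different form: a sequence $h_n$ satisfying all the hypotheses with a fixed $M$ can still collapse each sub-arc (say, send most of the arc from $1$ to $p_1$ into a tiny neighborhood of $h_n(1)$), so a pointwise limit $h_\infty$ need not be a homeomorphism, and the convergence property of Douady--Earle extensions you invoke (the one used in Theorems 6 and 7, for uniformly convergent homeomorphisms with uniformly bounded dilatations) is not available; the assertion that ``the limit $h_\infty$ cannot exhibit the collapsing behavior'' is exactly what has to be proved, and it is false as stated --- collapse inside the sub-arcs is consistent with the hypotheses; what must be ruled out is only collapse of the images of long arcs. The paper proves precisely this quantitative statement: after normalizing $ex(h)(0)=0$ by conformal naturality (which changes no cross-ratio), it shows in three cascading steps --- using the $\pi/3$ lower bound from \cite{HuMuzician1} forced by $ex(h)(0)=0$, then the bound on $Q_0$, then the bounds on $Q_1,\dots ,Q_4$ --- that the images of the four half-circles, then the four quarter-circles, then the eight arcs determined by $\{\pm1,\pm i,p_1,\dots ,p_4\}$ all have length at least $\epsilon_3(M)>0$; since any arc of length in $(\frac{2\pi}{3},\pi)$ contains one of these eight arcs (their two longest adjacent members span less than $2\arcsin\frac{4}{5}<\frac{2\pi}{3}$), Markovi\'c's lemma applies and gives the desired bound. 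Your intuition that $p_1=\frac{3+4i}{5}$ and its reflections are chosen to block the degeneration exhibited by the Section 3 counter-example is correct, but it is realized through this arc-length estimate, not through a cross-ratio-norm bound or a compactness argument.
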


\begin{proof} Using the conformal naturality of Douady-Earle
extensions and invariance of the cross-ratio distortion norm of
$h$ under postcomposition by M\"obius transformations, we can
assume that the Douady-Earle extension $ex(h)$ fixes the origin.
Using Lemma 3.6 in \cite{Markovic}, the conclusion of this lemma
follows from the following claim.

\medskip
\noindent {\bf Claim.} {\em There exist $0<s_0<\pi $ (universal,
one may take $s_0=\frac{2\pi }{3}$) and $\epsilon >0$ (only
depending on $M$) such that for any circular arc $\Gamma $ of
length $>s_0$ and $<\pi $, the length of $h(\gamma )$ is greater
than $\epsilon $.}

\medskip
We divide the proof of this claim into the following four steps.

\medskip
\noindent {\bf Step 1.} In this step, we show there exists a
constant $\epsilon _1>0$ only depending on $M$ such that for each
half circle $\Gamma $ bounded by two of the four points $\{\pm 1,
\pm i\}$, the length $|h(\Gamma )|$ of $h(\Gamma )$ is greater
than $\epsilon _1$. For example, we show in detail how the two
conditions $ex(h)(0)=0$ and $|\log cr(h(Q_1))|<M$ imply that the
length $|h(\widehat{-i,1,i})|$ is greater than $\epsilon _1$.

Let $A=-1$, $B=-i$, $C=1$ and $D=i$, and let $A'=h(A)$, $B'=h(B)$,
$C'=h(C)$, $D'=h(D)$ and $p_k'=h(p_k)$, where $k=1, 2, 3, 4$.

Using the normalization condition $ex(h)(0)=0$, it is shown in Step
1 of the proof of Lemma 1 in \cite{HuMuzician1} that
$$|\widehat{A'B'C'D'}|\ge \frac{\pi }{3},$$
where $\widehat{A'B'C'D'}$ represents the image under $h$ of the
three quarter of the circle from $-1$ to $i$ and through $-i$ and
$1$ in the counterclockwise direction. Furthermore, it is shown in
Case 1 in Step 2 of the proof of Lemma 1 in \cite{HuMuzician1}
that
$$|\widehat{B'C'D'p_2'}|> \frac{\pi }{3},$$
where $\widehat{B'C'D'p_2'}$ represents the image under $h$ of the
circular arc from $-i$ to $p_2$ and through $1$ and $i$ in the
counterclockwise direction.

Now we apply the condition $|\log cr(h(Q_1))|<M$ to show that there
exists $\epsilon _1>0$ such that
$$|\widehat{B'C'D'}|>\epsilon _1.$$
Suppose that $l=|\widehat{B'C'D'}|$ is very small. Then
$l_1=|\widehat{C'p_1'D'}|$ is also very small since
$\widehat{C'p_1'D'}\subset \widehat{B'C'D'}$. It also follows that
$$|\widehat{D'p_2'}|=|\widehat{B'C'D'p_1'}|-|\widehat{B'C'D'}|>\frac{\pi}{3}-l$$ and
$$|\widehat{A'B'C'}|=|\widehat{A'B'C'D'}|-|\widehat{C'p_1'D'}|>\frac{2\pi}{3}-l_1>\frac{2\pi
}{3}-l.$$ Then
$$|\log cr(h(Q_1))|=|\log \frac{2\sin
\frac{|\widehat{C'p_1'D'}|}{2}\times 2\sin
\frac{|\widehat{p_2'A'}|}{2}}{2\sin
\frac{|\widehat{A'B'C'}|}{2}\times 2\sin
\frac{|\widehat{D'p_2'}|}{2}}|.$$ Clearly,
$$\frac{2\sin
\frac{|\widehat{C'p_1'D'}|}{2}\times 2\sin
\frac{|\widehat{p_2'A'}|}{2}}{2\sin
\frac{|\widehat{A'B'C'}|}{2}\times 2\sin
\frac{|\widehat{D'p_2'}|}{2}}<\frac{2\sin \frac{l}{2}\times 2}{2\sin
(\frac{\pi}{3}-\frac{l}{2})\times 2\sin
(\frac{\pi}{6}-\frac{l}{2})}.$$ Thus this quotient approaches $0$ as
$l\rightarrow 0$, which implies that $|\log cr(h(Q_1))|$ goes to
$\infty $ as $l\rightarrow 0$. This is a contradiction to $|\log
cr(h(Q_1))|<M$. Therefore, there exists $\epsilon _1>0$ such that
$|\widehat{B'C'D'}|>\epsilon _1$.

Similarly, using $ex(h)(0)=0$ and $|\log cr(h(Q_3))|<M$, we show
$|\widehat{C'D'A'}|>\epsilon _1$; using $ex(h)(0)=0$ and $|\log
cr(h(Q_4))|<M$, we show $|\widehat{D'A'B'}|>\epsilon _1$; using
$ex(h)(0)=0$ and $|\log cr(h(Q_1))|<M$, we show
$|\widehat{A'B'C'}|>\epsilon _1$.

\medskip
\noindent{\bf Step 2.} Using the estimates on the lengths of the
images of the four half circles obtained in Step 1 and the
condition that $|\log cr(h(Q_0))|<M$, the strategy to show the
existence of $\epsilon _1$ also imply that there exists $\epsilon
_2>0$ such that the length of the image under $h$ of each of the
four quarters $\widehat{Cp_1D}$, $\widehat{Dp_2A}$,
$\widehat{Ap_3B}$ and $\widehat{Bp_4C}$ of the circle is greater
than $\epsilon _2$.

\medskip
\noindent{\bf Step 3.} Using the estimates on the lengths of the
images of the four half circles obtained in Step 1, the estimates
on the lengths of the images of the four quarters obtained in Step
2, and the condition that $|\log cr(h(Q_k))|<M$ for $k=1, 2, 3,
4$, the strategy to show the existence of $\epsilon _1$ also imply
that there exists $\epsilon _3>0$ such that the length of the
image under $h$ of each of the eight circular arc bounded by two
adjacent points among $\{-1, p_3, -i, p_4, 1, p_1, i, p_2\}$ is
greater than $\epsilon _3.$

\medskip
\noindent{\bf Step 4.} Let $s_0=\frac{2\pi}{3}$ and $\epsilon
=\epsilon _3$. Then for any circular arc $\Gamma $ with length
$s_0<|\Gamma |<\pi$, $\Gamma $ contains at least one circular arc
between two adjacent points among $\{-1, p_3, -i, p_4, 1, p_1, i,
p_2\}$ since two longest adjacent arcs among those eight arcs
comprise an arc of length $2\arcsin \frac{4}{5}<\frac{2\pi }{3}$.
Therefore, the length $|h(\Gamma)|>\epsilon _3=\epsilon $.

\end{proof}

\section{The Farey tesselation, shear maps, and proof of Theorem \ref{improvedversion}}

We first introduce some background on the Farey tesselation and
shear maps. Then we prove Theorem \ref{improvedversion}.

In the following, we use the upper half plane $\h$ as a model for
the hyperbolic plane and recall the Farey tesselation and the
shear map, which were introduced by Penner \cite{Penner} and
furthered studied by Saric \cite{Saric2}\ \cite{Saric3}.

Let $\triangle_0$ be the idea geodesic triangle on $\h$ with
vertices $0,1$ and $\infty$ and let $\Gamma$ be the group
generated by the hyperbolic reflections to the sides of
$\triangle_0$. The Farey tesselation $\mathcal{F}$ is the
collection of the geodesics on the $\Gamma$-orbits of the edges of
$\triangle_0$. The set of the endpoints of the geodesics in $\f$
is equal to $\widehat{\mathbb{Q}}=\mathbb{Q}\cup\{\infty \}.$

Let $e\in \f$ and let $a,c\in \re$ be the endpoints of $e$. Assume
that $(\triangle_1,\triangle_2)$ is a pair of two triangles on
$\h$ with disjoint interiors and sharing a common boundary $e$.
Besides the vertices $a,c$,  we denote the third vertices of
$\triangle_1$ and $\triangle_2$ by $b$ and $d$ respectively. Given
a homeomorphism $h:\re\rightarrow\re$, the shear $s_h(e)$
mentioned in the introduction can be precisely defined by
$$s_h(e)=\log cr(\tilde{h}(a,b,c,d)),$$
where $cr(\tilde{h}(a,b,c,d))$ is the cross ratio defined by
(\ref{defofcr}) in the introduction. Then $h:\re \rightarrow\re $
induces a map $s_h: \f\rightarrow \mathbb{R}$, which is called the
shear map or function or coordinate of $\tilde{h}$. Conversely,
given any map $s:\f\rightarrow \mathbb{R}$, there is a unique
injective map $h_s$ from the vertices $\widehat{\mathbb{Q}}$ of
the Farey tesselation $\f$ into $\re$ such that $h_s$ fixes $0,1$
and $\infty$ and the shear map of $h_s$ is equal to $s$. We call
$h_s$ the characteristic map of $s$.

A fan $\f_p$ of geodesics in $\f$ with tip $p\in
\widehat{\mathbb{Q}}$ consists of all edges of $\f$ that have a
common endpoint at $p$. Each fan $\f_p$ has a natural order. Take
a horocycle $C$ tangent to $\re $ at $p$ and choose an orientation
on $C$ such that the corresponding horoball is to the left of $C$.
Let $e,e^\prime$ be two geodesics in $\f_p$. Define $e<e^\prime$
if the point $e\cap C$ comes before the point $e^\prime\cap C$.
This natural order on $\f_p$ gives a one-to-one correspondence
between $\f_p$ and the set $\mathbb{Z}$ of integers, and any two
such correspondences differ by a translation in $\mathbb{Z}$.

Given a shear map $s$ and a fan
$\f_p=\{e_{n}^p\}_{n\in\mathbb{Z}}$, the quantity $s(p;m,k)$ is
defined by (\ref{defofs-pmk}) in the introduction.

\begin{remark} {\em Let $s: \f \rightarrow \mathbb{R}$ be a map from
$\f $ to $\mathbb{R}$ and let $h_s$ be the characteristic map of
$s$. Assume that $e_j^p, j=m-k, m, m+k$, are three geodesics in a
fan $\f_p$. Let $a_{j}^p$ be the other endpoint of $e_j^p$ besides
$p$, where $j=m-k, m, m+k$. Using the definition of cross ratio
given by (\ref{defofcr}) and through pre-composition by a M\"obius
transformation to arrange $p$ at $\infty $, one can easily see
that for any $m, k\in \mathbb{Z}$,
$$cr(\{p,
a_{m-k}^p,a_{m}^p,a_{m+k}^p\})=1.$$ Similarly, through
post-composition by a M\"obius transformation to arrange $h_s(p)$
at $\infty $, we see
 $$s(p;m,k)=cr( h_s(\{p,
a_{m-k}^p,a_{m}^p,a_{m+k}^p\})).$$

Using the definitions of $d_S$ and $d_C$ in the introduction, we
know that for any $h_1, h_2\in T(\ud)$,
$$d_S(s_1,s_2)\leq d_C(h_1,h_2),$$
where $s_1$ and $s_2$ are the shear functions induced by $h_1$ and
$h_2$.

Substituting $h_2$ by $id$ in the previous inequality, one can see
that the shear norm $M_{s_1}$ of $h_1$ is finite is a necessary
condition for $s_1$ to be induced by a quasisymmetric map. This
verifies the necessity of the condition in Theorem A of
\cite{Saric2} for a shear function to be induced by a
quasisymmetric homeomorphism.

Using the previous inequality and Theorem
\ref{equi-of-three-metrics-on-T}, we can conclude that as
$n\rightarrow \infty $, $$d_T(h_n,h)\rightarrow 0 \text{ implies }
d_S(s_n,s)\rightarrow 0.$$  It is shown in Theorem B of
\cite{Saric2} that as $n\rightarrow \infty $,
$$d_T(h_n,h)\rightarrow 0 \text{ if and only if } d_S(s_n,s)\rightarrow 0.$$
}
\end{remark}

Let $A=-\frac{z-i}{z+i}$ be the M$\mathrm{\ddot{o}}$bius
transformation form $\h$ to $\ud$ that maps $0,1$ and $\infty$ to
$1,i$ and $-1$ respectively. In the following, given a
homeomorphism $h:\uc\rightarrow\uc$ of $\uc $, we denote by
$\widetilde{h}=A^{-1}\circ h\circ A$, which is a homeomorphism of
$\re$ fixing $0,1$ and $\infty$. Then Lemma \ref{newlemma}
converts to the following corollary.

\begin{corollary}\label{Controlunder8points} Let $M>0$ and $\widetilde{h}$ be a homeomorphism of $\re$ with $0,1$ and
$\infty$ fixed. If $|\log cr (\widetilde{h}(Q))|<M$ holds for any of
the following five quadruples
$$Q_0=\{\infty , -1, 0, 1\}, Q_1=\{\infty , 0, \frac{1}{2}, 1\}, Q_2=\{\infty , 0, 1,
2\},$$  $$Q_3=\{\infty , -2, -1, 0\} \;{\rm and}\;Q_4=\{\infty ,
-1, -\frac{1}{2}, 0\},$$ then there exist a constant $C>0$ and a
neighborhood $U$ of $i$, only depending on $M$, such that
$$K(ex(\widetilde{h})|_{U})<C.$$
\end{corollary}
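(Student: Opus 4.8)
The plan is to reduce Corollary~\ref{Controlunder8points} to Lemma~\ref{newlemma} by transporting everything from the upper half-plane model back to the disk model via the Möbius map $A$. Given $\widetilde h$ as in the statement, I would set $h=A\circ\widetilde h\circ A^{-1}$, an orientation-preserving homeomorphism of $\uc$, so that $\widetilde h=A^{-1}\circ h\circ A$ is precisely the conjugate used to pass between the two models.

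The first task is the point correspondence. Using $A(z)=-\frac{z-i}{z+i}$ and its inverse $A^{-1}(w)=\frac{i(1-w)}{1+w}$, I would check by direct substitution that $A$ carries the five half-plane quadruples $Q_0,\dots,Q_4$ listed in the corollary onto the five disk quadruples of Lemma~\ref{newlemma}; concretely $A$ sends $0,1,\infty$ to $1,i,-1$, sends $-1,\tfrac12,2$ to $-i,p_1,p_2$, and sends $-2,-\tfrac12$ to $p_3,p_4$. Since cross ratio is invariant under the Möbius map $A^{-1}$, one has $cr(\widetilde h(Q))=cr\bigl(A^{-1}(h(A(Q)))\bigr)=cr(h(A(Q)))$, so the hypothesis $|\log cr(\widetilde h(Q_j))|<M$ in the half-plane is literally the hypothesis $|\log cr(h(Q_j))|<M$ of Lemma~\ref{newlemma} in the disk. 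Applying that lemma to $h$ then yields a universal neighborhood $U'$ of the origin of $\ud$ and a constant $C>0$, depending only on $M$, with $K(ex(h)|_{U'})<C$.

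It remains to push this bound back to $\widetilde h$. The crucial computation is $A(i)=0$, so a neighborhood of the origin of $\ud$ pulls back under $A$ to a neighborhood $U=A^{-1}(U')$ of $i\in\h$. By the conformal naturality of the Douady--Earle extension across the two models, $ex(\widetilde h)=A^{-1}\circ ex(h)\circ A$; and since $A$ and $A^{-1}$ are conformal, pre- and post-composition leave the maximal dilatation unchanged pointwise, whence $K(ex(\widetilde h)|_U)=K(ex(h)|_{A(U)})=K(ex(h)|_{U'})<C$. This is the desired bound.

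There is essentially no analytic obstacle here, since all the hard estimates are already contained in Lemma~\ref{newlemma}; the only points requiring care are the bookkeeping for the quadruple correspondence and the identity $A(i)=0$, together with a correct invocation of the conformal naturality of the Douady--Earle extension between the half-plane and disk models to justify $ex(\widetilde h)=A^{-1}\circ ex(h)\circ A$.
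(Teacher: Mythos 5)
Your proposal is correct and is exactly the paper's (implicit) argument: the paper simply asserts that Lemma~\ref{newlemma} ``converts to'' Corollary~\ref{Controlunder8points} via the M\"obius map $A$, and your write-up supplies precisely that conversion --- the verification that $A$ carries the five half-plane quadruples to the five disk quadruples of the lemma, the M\"obius invariance of the cross ratio, the identity $A(i)=0$, and the conformal naturality giving $ex(\widetilde h)=A^{-1}\circ ex(h)\circ A$ with preservation of maximal dilatation.
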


\begin{proof} [Proof of Theorem \ref{improvedversion}] Given a homeomorphism $\tilde{h}_n$ of
$\re$, we denote by $s_n$ the shear map of $\tilde{h}_n$. Suppose
that Theorem \ref{improvedversion} fails. Then there exists a
sequence $\{\widetilde{h}_n\}_{n=1}^\infty$ of homeomorphisms of
$\re$ such that  $M_{s_n}<M$ for all $n$ and
$K(ex(\widetilde{h}_n))\rightarrow \infty $ as $n\rightarrow
\infty.$ Then there exists a sequence $\{z_n\}_{n=1}^{\infty }$ of
points on $\h$ such that
$|\mu_{ex(\widetilde{h}_n)}(z_n)|\rightarrow 1 $ as $n\rightarrow
\infty.$ Denote by $\triangle_n$ the ideal triangle in $\f$ such
that $z_n\in \overline{\triangle}_n$. Let $A_n\in
PSL(2,\mathbb{Z})$ such that $A_n(\triangle_n)=\triangle_0$, and
let $B_n\in PSL(2,\mathbb{R})$ such that $\widehat{h}_n=B_n\circ
\widetilde{h}_n \circ (A_n)^{-1}$ fixes $0,1$ and $\infty.$

Let $z_n^\prime=A_n(z_n)$. Consider the following two cases of
$\{z_n^\prime\}_{n=1}^\infty$. One is that a subsequence of
$\{z_n^\prime\}_{n=1}^\infty$ stays in a compact subset of
$\overline{\triangle}_0$, the other is that
$\{z_{n}\}_{n=1}^{\infty }$ has a subsequence converging to an
endpoint of $\triangle _0$ as $n\rightarrow\infty.$ In the second
case, without loss of generality we may assume that $z_{n}$
converges to $\infty$ as $n\rightarrow\infty.$

In the first case, we briefly denote the subsequence by
$\{z_n^\prime\}_{n=1}^\infty$ again. Since $M_{s_n}<M$ for all
$n,$ the sequence of the shear maps $s_n\circ A_n^{-1}$
corresponding to the homeomorphisms $\widehat{h}_n=B_n\circ
\widetilde{h}_n \circ (A_n)^{-1}$ has a convergent subsequence in
the sense that for any $e\in \f$, the sequence of real number
$s_{n_k}\circ A_{n_k}^{-1}(e)$ converges as $k\rightarrow\infty.$
It is clear that the limiting shear map $s_\infty:\f\rightarrow
\mathbb{R}$ satisfies $M_{s_\infty }<M.$ Since all maps
$\widehat{h}_{n_k}$ are normalized to fix three points, it is
shown on pages 2418-9 in \cite{Saric2} that the convergent
subsequence $\{\widehat{h}_{n_k}\}_{k=1}^{\infty }$ converges
pointwisely to a homeomorphism $\widehat{h}_\infty$ inducing the
shear map $s_\infty.$ Then $|\mu_{ex(\widehat{h}_{n_k})}|$
converges to $|\mu_{ex(\widehat{h}_\infty)}|$ uniformly on any
compact subset of $\h$. Since $ex(\widehat{h}_\infty)$ is a real
analytic diffeomorphism, it follows that for any compact $\Omega$
of $\h$, there exists $0<a<1$ such that
$|\mu_{ex(\widehat{h}_\infty)}|<a $ on $\Omega$. On the other
hand, by the conformal naturality of Douady-Earle extensions,
$$|\mu_{ex(\widehat{h}_{n_k})}(z_{n_k}^\prime)|=|\mu_{ex(\widetilde{h}_{n_k})}(z_{n_k})|\rightarrow
1 \ as\  k\rightarrow \infty.$$ This is a contradiction.

Now we show that there will be a contradiction coming up in the
second case too. Let $\lambda_{n}$ be the greatest even integer
less than or equal to $Im(z_n^\prime)$ and let
$\lambda_{n}^\prime$ be the real number such that
$\widehat{h}^\ast_{n}(x)=\lambda_{n}^\prime\widehat{h}_n(\lambda_{n}x)$
fixes $0,1$ and $\infty.$  We show that $\widehat{h}^\ast_{n}$
satisfies the conditions in Corollary 1 for all $n$. For example,
we verify that if  $Q=\{\infty,-1,0,1\}$, then for all $n$,
$$|\log cr(\widehat{h}^\ast_{n}(Q))|\leq M.$$
Verifications of the above inequality for the other four cases of
$Q$ in Corollary 1 are similar. Therefore, we skip them here.

Let $e_k^\infty$ be the edge in $\f$ that connects the integer $k$
to $\infty$. Using the fan $\{e_k^\infty\}_{k\in \mathbb{Z}}$, we
obtain
$$\begin{array}{lll} |\log cr(\widehat{h}^\ast_{n}(\{\infty,-1,0,1\}))|
&=&|\log
cr(\widehat{h}_{n}(\{\infty,-\lambda_{n},0,\lambda_{n}\}))|\\
&=& |\log cr(\tilde{h}_n\circ A_{n}^{-1}(\{\infty, -\lambda _n,
0,\lambda_{n}\}))|\leq M_{s_n}\leq M.
\end{array}$$
Let $w_n=(1/\lambda_{n})z_n^\prime$. Then $w_n$ converges to the
point $i$ as $n\rightarrow \infty $. Therefore,
$$|\mu_{ex(\widehat{h}^\ast_{n})}(w_n)|=|\mu_{ex(\widehat{h}_n)}(z_n^\prime)|=
|\mu_{ex(\widetilde{h}_n)}(z_n)|\rightarrow 1$$  as $n\rightarrow
\infty.$ This is a contradiction to Corollary
\ref{Controlunder8points}.

We complete the proof.
\end{proof}

\section{A metric characterization of $AT(\ud )$ through
equivalent classes of shear maps}

For each edge $e$ in $\f$, we define the Farey generation $g(e)$
of $e$ as follows. Each boundary edge of $\triangle_0$ has Farey
generation 0. If an edge $e\in \f$ is obtained by $n$ reflections
of an edge of $\triangle_0$ (where $n$ is the smallest such
number), then the Farey generation $g(e)$ of $e$ is defined to be
$n$.

Given any two points $h_1,h_2\in T(\ud)$,  let $s_1$ and $s_2$ be
the shear maps induced by  $h_1$ and $h_2$ respectively. For each
fan $\f_p=\{e_{n}^p\}_{n\in\mathbb{Z}}$, we let
$$AM_{s_1,s_2}(p)=\limsup_{\min\{g(e^p_{m+k}),g(e^p_{m-k})\}\rightarrow \infty}
|\log\frac{s_1(p;m,k)}{s_2(p;m,k)}|,$$ and
$$d_{AS}{(s_1,s_2)}=\sup_{p\in \mathcal{P}} AM_{s_1,s_2}(p),$$
where $\mathcal{P}$ is the collection of all endpoints of the
geodesics in $\f$.

Recall that $\x$ is the collection of all real-valued functions
$s:\f \rightarrow \mathbb{R}$ with finite shear norms, and $\x$
gives a parametrization of $T(\ud )$. Then it is clear that
$d_{AS}$ defines a pseudo metric on $\mathcal{X}$ and hence
introduces an equivalent relation on $\mathcal{X}$ by defining two
elements $s$ and $s^\prime $ of $\mathcal{X}$ to be equivalent if
$d_{AS}(s, s^\prime )=0$. We denote by $[s]$ the equivalent class
of $s$ and by $\mathcal{AX}$ the quotient space under this
equivalent relation. The pseudo metric $d_{AS}$ on $\x $ induces a
a metric on $\mathcal{AX}$, which is called the {\em asymptotic
shear metric} on $\mathcal{AX}$ and still denoted by $d_{AS}$. In
this section, we prove our Theorems
\ref{charactrization-via-shears} and \ref{top-equiv}, that is, we
show that $\mathcal{AX}$ is a parametrization of $AT(\ud )$ and
$(\mathcal{AX}, d_{AS})$ is topologically equivalent to $(AT(\ud
), d_{AT})$, where $d_{AT}$ is the \te \ metric on $AT(\ud )$.

\begin{remark} {\em Let $e_j^p$ be an edge in $\f$ with tip $p$ and let $a_{j}^p$ be the
endpoint of $e_j^p$ not equal to $p$. Note first that
$\min\{g(e^p_{m+k}),g(e^p_{m-k})\}\rightarrow \infty$ is
equivalent to say that the minimal scale of the corresponding
quadruple $\{p, a_{m-k}^p,a_{m}^p,a_{m+k}^p\}\rightarrow 0$. From
 the definitions of $d_{AS}$ and $d_{AC}$, it follows that for any
two points $[h_1]$ and $[h_2]$ in $AT(\ud )$,
$$d_{AS}([s_1],[s_2])\leq d_{AC}([h_1],[h_2]).$$
From Theorem \ref{equi-of-three-metrics-on-AT}, we know that
$d_{AT}([h_n],[h])\rightarrow 0$ implies
$d_{AS}([s_n],[s])\rightarrow 0$ as $n\rightarrow \infty $. Our
Theorem \ref{top-equiv} states that $d_{AT}([h_n],[h])\rightarrow
0$ if and only if $d_{AS}([s_n],[s])\rightarrow 0$ as
$n\rightarrow \infty $. }
\end{remark}

\begin{remark}
{\em If $h_2$ is the identity map, then $s_2\equiv 0$ and
$s_2(p;m,k)\equiv 1$. Thus
$$ d_{AS}{(s_1, 0)}=\sup_{p\in \mathcal{P}}\limsup_{\min\{g(e^p_{m+k}),g(e^p_{m-k})\}\rightarrow \infty}
|\log s_1(p;m,k)|.$$ It has already been proved in Theorem A of
\cite{Saric2} that $h_1$ is symmetric if and only if $d_{AS}{(s_1,
0)}=0$. This means the equivalent class containing the identity
map is the collection $S(\uc )$ of all symmetric homeomorphisms in
$QS(\uc )$. This is a special case of our Theorem
\ref{charactrization-via-shears}.}
\end{remark}

\begin{proof} [Proof of Theorem \ref{charactrization-via-shears}] Given two points $h$ and $h^\prime$ in $T(\ud ),$  let $s$
and $s^\prime$ be the shear maps of $\widetilde{h}=A^{-1}\circ
h\circ A$ and $\widetilde{h}^\prime=A^{-1}\circ h^\prime\circ A$
respectively. If $[h]=[h^\prime],$ then it follows from Remark 2
and Theorem \ref{equi-of-three-metrics-on-AT} that
$d_{AS}(s,s^\prime)=0.$

In the following, we apply some ideas in \cite{Saric2} to show
that $d_{AS}(s,s^\prime)=0$ implies $[h]=[h^\prime].$ Suppose on
the contrary this is not true. Then there exist two points $h$ and
$h'$ in $T(\ud )$ such that $d_{AS}(s,s^\prime)=0$ but
$[h]\neq[h^\prime]$. Using the same notation in the previous
section, we let $A=-\frac{z-i}{z+i}$ and
$\widetilde{h}=A^{-1}\circ h\circ A$ and
$\widetilde{h}'=A^{-1}\circ h'\circ A$. Denote by
$ex(\widetilde{h})$ and $ex(\widetilde{h^\prime})$ the
Douady-Earle extensions of $\widetilde{h}$ and
$\widetilde{h^\prime}$ respectively. Since $[h]\neq[h^\prime]$,
there exist a constant $c>0$ and a sequence of points $z_n\in \h$
leaving every compact subset of $\h$ such that
\begin{equation}\label{5-1}
|\mu_{ex(\widetilde{h})}(z_n)-\mu_{ex(\widetilde{h^\prime})}(z_n)|\geq
c.
\end{equation}
Denote by $\triangle_n$ the ideal triangle in $\f$ such that
$z_n\in \overline{\triangle}_n$ and by $A_n$ the element in
$PSL(2,\mathbb{Z})$ such that $A_n(\triangle_n)=\triangle_0$. Let
$B_n$ and $B_n^\prime$ be the elements in $PSL(2,\mathbb{R})$ such
that $\widetilde{h}_n=B_n\circ \widetilde{h} \circ (A_n)^{-1}$ and
$\widetilde{h}^\prime_n=B_n^\prime\circ \widetilde{h}^\prime \circ
(A_n)^{-1}$ fixing $0,1$ and $\infty .$

Let $z_n^\prime=A_n(z_n)$. It suffices to consider the following
two cases: one is that $\{z_n^\prime\}_{n=1}^\infty$ has a
subsequence staying in a compact set of $\overline{\triangle}_0$
and the other is that $z_{n}$ has a subsequence converging to one
of the endpoint of $\triangle _0$ as $n\rightarrow\infty.$ In the
second case, without loss of generality we assume that $z_n$
converges to $\infty $ as $n\rightarrow \infty $.

In the first case, for brevity in notation we continue to denote
by $\{z_n^\prime\}_{n=1}^\infty$ the subsequence staying in a
compact set of $\overline{\triangle}_0$. Given an edge $e\in \f$,
we claim that the Farey generation $g(A_n^{-1}(e))$ goes to
$\infty$ as $n\rightarrow \infty.$ This is clear if $e$ is an edge
$e_0$ of $\triangle_0$ (since $A_n^{-1}(e_0)$ is an edge of
$\triangle_n$). For any other edge $e\in \f $, we note that for
any $n\in \mathbb{N}$,
$$g(A_n^{-1}(e))-g(A_n^{-1}(e_0))=g(e)-g(e_0),$$
which implies that $g(A_n^{-1}(e))\rightarrow \infty $ as
$n\rightarrow \infty $. Now using the assumption
$d_{AS}(s,s^\prime)=0$, we know for each $e\in \f$, $s\circ
A_n^{-1}(e)$ and $s^\prime\circ A_n^{-1}(e)$ converge to the same
limiting value. Therefore, $\{\widetilde{h}_n\}_{n=1}^\infty$ and
$\{\widetilde{h}_n^\prime\}_{n=1}^\infty$ converge to the same
limiting homeomorphism pointwisely. Since
$\{K(ex(\widetilde{h}_n))\}_{n=1}^\infty$ and
$\{K(ex(\widetilde{h}_n^\prime))\}_{n=1}^\infty$ are bounded and
all $\tilde{h}_n$ and $\tilde{h}_n^\prime$ are normalized to fix
three points, passing to subsequences we may assume that
$\{\widetilde{h}_n\}_{n=1}^\infty$ and
$\{\widetilde{h}^\prime_n\}_{n=1}^\infty$ converge uniformly to
the same quasisymmetric homeomorphism of $\re$. Thus,
$$|\mu_{ex(\widetilde{h})}(z_n)-\mu_{ex(\widetilde{h^\prime})}(z_n)|=
|\mu_{ex(\widetilde{h}^\prime_n)}(z_n^\prime)-\mu_{ex(\widetilde{h}_n)}(z_n^\prime)|\rightarrow
0$$ as $n\rightarrow\infty$. This is a contradiction to
(\ref{5-1}).

In the second case, let $\lambda_{n,1}$ be the greatest integer
less than or equal to $Im(z_n^\prime)$. Then
$\lambda_{n,1}\rightarrow\infty$ as $n\rightarrow\infty$. Let
$\beta_{n,1}$ and  $\beta_{n,1}^\prime$ be the two real numbers
such that $\widehat{h}_{n,1}(x)=\beta_{n,1} \widetilde{h}_n
(\lambda_{n,1} x)$ and
$\widehat{h}_{n,1}^\prime(x)=\beta_{n,1}^\prime \widetilde{h}'_n
(\lambda_{n,1} x)$ fix $0,1$ and $\infty.$

Let $e_k^\infty$ be the edge in $\f$ connecting an integer $k$ to
$\infty$. At first we claim that there exists at most one $k_0\in
\mathbb{Z}$ such that the Farey generation
$g(A_n^{-1}(e_{\lambda_{n,1}k_0}^\infty))$ is bounded as
$n\rightarrow\infty.$ In the fan $\{e^\infty_k\}_{k\in
\mathbb{Z}}$, there are $|\lambda_{n,1}k_0-\lambda_{n,1}k|$ edges
between $e^{\infty }_{\lambda_{n,1}k_0}$ and $e^{\infty
}_{\lambda_{n,1}k}.$ Given any two integers $k\neq k_0,$ since
$|\lambda_{n,1}k_0-\lambda_{n,1}k|\rightarrow\infty$ as
$n\rightarrow\infty $, it follows that
$g(A_n^{-1}(e_{\lambda_{n,1}k}^\infty))$ is unbounded if
$g(A_n^{-1}(e_{\lambda_{n,1}k_0}^\infty))$ is bounded. Thus, the
claim follows.

We now show that for all  $k\in \mathbb{Z}$,
\begin{equation}\label{5-2}
\lim_{n\rightarrow \infty}\widehat{h}_{n,1}(k)= \lim_{n\rightarrow
\infty}\widehat{h}_{n,1}^\prime(k).\end{equation} We divide the
proof into three cases according to the value of $k_0$: $k_0=0$,
$k_0<0$ or $k_0>0$.

If $k_0=0,$ then the Farey generations
$g(A_n^{-1}(e_{-\lambda_{n,1}}^\infty))$ and
$g(A_n^{-1}(e_{\lambda_{n,1}}^\infty))$ go to $\infty$ as
$n\rightarrow\infty.$ Since $d_{AS}(s,s^\prime)=0$, it follows
from Remarks 1 and 2 that
$$\begin{array}{lr}
\frac{s\circ A_n^{-1}(\infty, 0, \lambda_{n,1})}{s^\prime \circ
A_n^{-1}(\infty, 0, \lambda_{n,1})}=
\frac{cr(\widetilde{h}^\prime_n(-\lambda_{n,1},0,\lambda_{n,1},\infty))}
{cr(h_n(-\lambda_{n,1},0,\lambda_{n,1},\infty))}\\
 =\frac{cr(\widehat{h}_{n,1}^\prime(-1,0,1,\infty))}
{cr(\widehat{h}_{n,1}(-1,0,1,\infty))}=\frac{\widehat{h}_{n,1}^\prime(-1)}{\widehat{h}_{n,1}(-1)}
\rightarrow 1 \ \mathrm{as }\ n\rightarrow\infty.
\end{array}$$
Thus, $$\lim_{n\rightarrow \infty}\widehat{h}_{n,1}(-1)=
\lim_{n\rightarrow \infty}\widehat{h}_{n,1}^\prime(-1).$$ Using
the symmetric triple $(-1,1,3)$, the assumption
$\widehat{h}_{n,1}(1)=\widehat{h}_{n,1}^\prime(1)$ and the
equality of the above two limits, we first obtain
$$\lim_{n\rightarrow \infty}\widehat{h}_{n,1}(3)=
\lim_{n\rightarrow \infty}\widehat{h}_{n,1}^\prime(3).$$ Then we
use $(1,2,3)$ and then $(2,3,4)$ and so on to obtain that
$$\lim_{n\rightarrow \infty}\widehat{h}_{n,1}(k)=
\lim_{n\rightarrow \infty}\widehat{h}_{n,1}^\prime(k)$$ for each
$k\in \mathbb{N}.$ Thirdly, for each $k\in \mathbb{N}$, we use the
triple $(-k,0,k)$ to obtain that $$\lim_{n\rightarrow
\infty}\widehat{h}_{n,1}(-k)= \lim_{n\rightarrow
\infty}\widehat{h}_{n,1}^\prime(-k).$$  Using the normalization
condition, it is clear that
$$\lim_{n\rightarrow
\infty}\widehat{h}_{n,1}(0)= \lim_{n\rightarrow
\infty}\widehat{h}_{n,1}^\prime(0).$$ Therefore, (\ref{5-2}) holds
if $k_0=0$.

If $k_0<0,$ we first use the triple $(0,1,2)$ to obtain
$$\lim_{n\rightarrow \infty}\widehat{h}_{n,1}(2)=
\lim_{n\rightarrow \infty}\widehat{h}_{n,1}^\prime(2).$$ Using the
same argument as in the case when $k_0=0$, we obtain for each
$k\in \mathbb{N}$,
$$\lim_{n\rightarrow \infty}\widehat{h}_{n,1}(k)=
\lim_{n\rightarrow \infty}\widehat{h}_{n,1}^\prime(k).$$ Thirdly,
for each $k\in \mathbb{Z}\backslash k_0$, we use $(-k,0,k)$ to
obtain that $$\lim_{n\rightarrow \infty}\widehat{h}_{n,1}(k)=
\lim_{n\rightarrow \infty}\widehat{h}_{n,1}^\prime(k).$$  Finally,
we use $(k_0-1,k_0,k_0+1)$ to obtain that $$\lim_{n\rightarrow
\infty}\widehat{h}_{n,1}(k_0)= \lim_{n\rightarrow
\infty}\widehat{h}_{n,1}^\prime(k_0).$$ Therefore, we show
(\ref{5-2}) if $k_0<0$.

The proof of (\ref{5-2}) for $k_0>0$ is very similar to what we
have done in the case $k_0<0$. We skip it.

For each $r\in \mathbb{N}$, let $\lambda_{n,r}$ be the greatest
integral multiple of $2^{r-1}$ less than or equal to
$Im(z_n^\prime)$. Let $\beta_{n,r}$ and  $\beta_{n,r}^\prime$ be
the real numbers such that $\widehat{h}_{n,r}(x)=\beta_{n,r}
\widetilde{h}_n (\lambda_{n,r} x)$ and
$\widehat{h}_{n,r}^\prime(x)=\beta_{n,r}^\prime \widetilde{h}_n
(\lambda_{n,r} x)$ fix $0,1$ and $\infty.$  Using the same
argument in the case when $r=1$, it is easy to see that there
exists at most one integer $k_{r-1}\in \mathbb{Z}$ such that the
Farey generation $g(A_n^{-1}(e_{\lambda_{n,r}k_{r-1}}^\infty))$ is
bounded as $n\rightarrow\infty$. Now we claim that for each $r\in
\mathbb{N}$ and each $k\in \mathbb{Z}$,
\begin{equation}\label{5-3}
\lim_{n\rightarrow \infty}\widehat{h}_{n,r}(\frac{k}{2^{r-1}})=
\lim_{n\rightarrow
\infty}\widehat{h}_{n,r}^\prime(\frac{k}{2^{r-1}}). \end{equation}
This claim can be proved by an induction on $r$. We have just
shown the case when $r=1$. Assume that the claim is true for
$r=m$, we only need to show the claim is true for $r=m+1$ and $k$
is odd. Given each $k\in \mathbb{Z},$ we can choose an odd integer
$k^\prime\in \mathbb{Z}$ such that both
$\frac{k-k^\prime}{2^{m-1}}$ and $\frac{k+k^\prime}{2^{m-1}}$ are
not equal to $k_{r-1}$. Since the limits of $\widehat{h}_{n,r}$
and $\widehat{h}_{n,r}^\prime$ are the same at the points
$\frac{k-k^\prime}{2^{m-1}}$ and $\frac{k-k^\prime}{2^{m-1}}$,
using the symmetric triple $(\frac{k-k^\prime}{2^{m-1}},
\frac{k}{2^{m}}, \frac{k+k^\prime}{2^{m-1}})$ we obtain the claim
(\ref{5-3}) for $r=m+1$ and an odd integer $k$.

For each $r\in \mathbb{N}$, since
$\{K(ex(\widehat{h}_{n,r}))\}_{n=1}^\infty$ and
$\{K(ex(\widehat{h}_{n,r}^\prime))\}_{n=1}^\infty$ are bounded and
all the maps in these two sequences are normalized to fix three
points, passing to subsequences we may assume that
$\{\widehat{h}_{n,r}\}_{n=1}^\infty$ and
$\{\widehat{h}_{n,r}^\prime\}_{n=1}^\infty$ converge uniformly to
two quasisymmetric homeomorphisms of  $\re$, which agree on
$\{\frac{k}{2^{r-1}}: k\in \mathbb{Z}.\}$ because of the previous
claim (\ref{5-3}). Then for each $r\in \mathbb{N}$, we choose a
sufficiently big $n_r$ such that for each $k\in \mathbb{Z}$,
$$|\widehat{h}_{n_r,r}(\frac{k}{2^{r-1}})-\widehat{h}_{n_r,r}^\prime(\frac{k}{2^{r-1}})|\leq
1/r.$$ Since $Im(z_{n_r}^\prime)\rightarrow \infty $ as
$n\rightarrow \infty $, we can also choose $n_r$ large enough such
that $1\leq Im(z_{n_r}^\prime)/\lambda_{n_r,r}<2$. Then
$\{\frac{z_{n_r}^\prime}{\lambda_{n_r,r}}\}_{r=1}^{\infty }$ stays
in a compact subset of $\mathbb{H}$. Because $\{\frac{k}{2^r}:
k\in \mathbb{Z}, r\in \mathbb{N}\}$ is dense in $\re$, the two
sequences $\{\widehat{h}_{n_r,r}\}_{r=1}^\infty$ and
$\{\widehat{h}_{n_r,r}^\prime\}_{r=1}^\infty$ converge pointwisely
to the same quasisymmetric homeomorphism of $\re$. Using a
property of Douady-Earle extensions, we conclude that
\begin{equation}\label{5-4}
|\mu_{ex(\widehat{h}_{n_r,r})}
(\frac{z_{n_r}^\prime}{\lambda_{n_r,r}})-
\mu_{ex(\widehat{h}^\prime_{n_r,r})}(\frac{z_{n_r}^\prime}{\lambda_{n_r,r}})|\rightarrow
0 \;\;\text{as}\;\;r\rightarrow \infty .
\end{equation}
On the other hand, using (\ref{5-1}) we obtain for each $r\in
\mathbb{N}$,
\begin{equation*}
\begin{array}{lll} |\mu_{ex(\widehat{h}_{n_r,r})}
(\frac{z_{n_r}^\prime}{\lambda_{n_r,r}})-
\mu_{ex(\widehat{h}^\prime_{n_r,r})}(\frac{z_{n_r}^\prime}{\lambda_{n_r,r}})|
&=&|\mu_{ex(\widetilde{h}_{n_r,r})}(z^\prime_{n_r,r})-
\mu_{ex(\widetilde{h}^\prime_{n_r,r})}(z^\prime_{n_r,r})|\\
&=&
|\mu_{ex(\widetilde{h})}(z_{n_r,r})-\mu_{ex(\widetilde{h^\prime})}(z_{n_r,r})|\geq
c>0.
\end{array}
\end{equation*}
This is a contradiction to (\ref{5-4}).
\end{proof}

\begin{proof} [Proof of Theorem \ref{top-equiv}] By Theorem 7 and Remark 2, we
see that $d_{AT}([h_n],[h])\rightarrow 0$ implies
$d_{AS}([s_n],[s])\rightarrow 0$ as $n\rightarrow \infty$.

It remains to show that $d_{AS}([s_n],[s])\rightarrow 0$ implies
$d_{AT}([h_n],[h])\rightarrow 0$ as $n\rightarrow \infty$. Suppose
that on the contrary this implication is not true. Then there
exist $[h]\in AT(\ud )$ and a sequence $\{[h_n]\}_{n=1}^{\infty }$
in $AT(\ud )$ such that $d_{AS}([s_n], [s])$ goes to $0$ but
$d_{AT}([h_n],[h])$ dose not go to $0$ as $n\rightarrow \infty$.
Passing to a subsequence, we may assume that
$$d_{AT}([h_n],[h])\geq c'$$ for some $c'>0$ and all $n$. Then
there exist a constant $c>0$ and a sequence of points $z_n\in \h$
leaving every compact subset of $\h$ such that
\begin{equation}\label{5-6}
|\mu_{ex(\widetilde{h})}(z_n)-\mu_{ex(\widetilde{h}_n)}(z_n)|\geq
c.
\end{equation}
Using a very similar strategy in the proof of the previous
theorem, a contradiction can be derived as follows.

Denote by $\triangle_n$ the ideal triangle in $\f$ such that
$z_n\in \overline{\triangle}_n$ and by $A_n$ the element in
$PSL(2,\mathbb{Z})$ such that $A_n(\triangle_n)=\triangle_0$. Let
$B_n$ and $B_n^\prime$ be the elements in $PSL(2,\mathbb{R})$ such
that $\widehat{h}_n=B_n\circ \widetilde{h} \circ (A_n)^{-1}$ and
$\widehat{h}^\prime_n=B_n^\prime\circ \widetilde{h}_n\circ
(A_n)^{-1}$ fix $0,1$ and $\infty.$ Let $z_n^\prime=A_n(z_n)$.
Consider the following two cases: one is that the sequence
$\{z_n^\prime\}_{n=1}^\infty$ has a subsequence staying in a
compact set of $\overline{\triangle}_0$ and the other is that
without loss of generality, we may assume $z_{n}$ goes to $\infty$
as $n\rightarrow\infty.$

In the first case, we denote the subsequence by
$\{z_n^\prime\}_{n=1}^\infty$ again. For each  $e\in \f$,
$g(A_n^{-1}(e))$ goes to $\infty$ as $n\rightarrow \infty.$ Using
the assumption that $d_{AS}([s_n],[s])$ approaches $0$ as
$n\rightarrow \infty $, we see that $s\circ A_n^{-1}(e)$ and
$s_n\circ A_n^{-1}(e)$ converge to the same limit for each $e\in
\f$. Using
 the normalization condition, we conclude that $\{\widehat{h}_n\}_{n=1}^\infty$
 and
$\{\widehat{h}^\prime_n\}_{n=1}^\infty$ converge pointwisely to
the same quasisymmetric homeomorphism of $\re$. Using a property
of Douady-Earle extensions,
\begin{equation}\label{5-7}
|\mu_{ex(\widehat{h}^\prime_n)}(z^\prime_n)-\mu_{ex(\widehat{h}_n)}(z^\prime_n)|\rightarrow
0 \;\;\text{as}\;\;n\rightarrow \infty .
\end{equation}
On the other hand, (\ref{5-6}) implies
\begin{equation*} |\mu_{ex(\widehat{h}^\prime_n)}(z^\prime_n)-\mu_{ex(\widehat{h}_n)}(z^\prime_n)|
=|\mu_{ex(\widetilde{h})}(z_n)-\mu_{ex(\widetilde{h}_n)}(z_n)|
\geq c>0.
\end{equation*}
This is a contradiction to (\ref{5-7}).

In the second case, for each $r\in \mathbb{N}$, let
$\lambda_{n,r}$ be the greatest integral multiple of $2^{r-1}$
less than or equal to $Im(z_n^\prime)$. Let $\beta_{n,r}$ and
$\beta_{n,r}^\prime$ be the real numbers such that
$\widehat{h}_{n,r}(x)=\beta_{n,r} \widetilde{h}_n (\lambda_{n,r}
x)$ and $\widehat{h}_{n,r}^\prime(x)=\beta_{n,r}^\prime
\widetilde{h}_n (\lambda_{n,r} x)$ fix $0,1$ and $\infty.$ Using
the assumption that $d_{AS}([s_n],[s])\rightarrow0$ as
$n\rightarrow \infty $ and following the exact same details used
to derive a contradiction for the second case in the proof of
Theorem \ref{charactrization-via-shears}, we can conclude that for
each $r\in \mathbb{N}$, $ \{\widehat{h}_{n,r}\}_{n=1}^\infty$ and
$\{\widehat{h}_{n,r}^\prime\}_{n=1}^\infty$ converge pointwisely
to two quasisymmetric homeomorphisms of $\re $ that agree on the
set $\{\frac{k}{2^{r-1}}, k\in \mathbb{Z}\}$. Then the part of the
last paragraph in the proof of Theorem
\ref{charactrization-via-shears} after the first sentence also
forms the rest of the proof for Theorem \ref{top-equiv}.
\end{proof}

\bibliographystyle{amsplain}

\begin{thebibliography}{10}

\bibitem{Abikoff}W. Abikoff, \textit{Conformal barycenters and the Douady-Earle extension
- A discrete dynamical approach}, Jour. d'Analyse Math. 86 (2002)
221-234.

\bibitem{DouadyEarle}
A. Douady and C. Earle, \textit{Conformally natural extension of
homeomorphisms of the circle}, Acta Math. 157 (1986) 23-48.

\bibitem{EarleGardinerLakic2000}  C. J. Earle,  F. P. Gardiner,  N. Lakic,
\textit{Asymptotic Teichm$\mathrm{\ddot{u}}$ller  space. I. The
complex structure}, In the tradition of Ahlfors and Bers, 17-38,
Contemp. Math., 256, Amer. Math. Soc., Providence, RI, 2000.

\bibitem{EarleGardinerLakic2004}C. J. Earle, F. P. Gardiner, N. Lakic,
 \textit{Asymptotic Teichm$\mathrm{\ddot{u}}$ller space. II. The metric structure},  In
the tradition of Ahlfors and Bers III, 187-219, Contemp. Math. 355,
Amer. Math. Soc., Providence, RI, 2004.

\bibitem{EarleMarkovicSaric} C. J. Earle, V. Markovic, D. \v{S}ari\'c,
 \textit{Barycentric extension and the Bers embedding for
asymptotic Teichm$\mathrm{\ddot{u}}$ller space}, In  Complex
manifolds and hyperbolic geometry (ed. C. J. Earle, W. J. Harvey
and S. Recillas-Pishmish), 87--105, Contemp. Math. 311, Amer.
Math. Soc., Providence, RI, 2002.

\bibitem{FanHu} J. Fan and J. Hu,
\textit{Topological characterization of an asymptotic
Teichm\"uller space through measured geodesic laminations},
Preprint, Graduate Center of CUNY, 2013.

\bibitem{GardinerHuLakic}
F. P. Gardiner, J. Hu and N. Lakic, \textit{Earthquake curves}, In
Complex Manifolds and Hyperbolic Geometry (ed. C. J. Earle, W. J.
Harvey and S. Recillas-Pishmish), 141--195, Contemp. Math. 311,
Amer. Math. Soc., Providence, RI, 2002.


\bibitem{GardinerLakic} F. P. Gardiner, N. Lakic,  \textit{Quasiconformal $\mathrm{Teichm\ddot{u}ller}$  spaces},
Math Surveys and Monogr. 76, Amer. Math. Soc., (2000).

\bibitem{GardinerSullivan} F. P. Gardiner, D. Sullivan,
\textit{Symmetric structures on a closed curve}, Amer. J. Math.
114 (1992) 683--736.

\bibitem{Hu} J. Hu, \textit{Earthquake measure and cross-ratio distortion}, In the
tradition of Ahlfors and Bers, III, 285--308, Contemp. Math., 355,
Amer. Math. Soc., Providence, RI, 2004.

\bibitem{HuMuzician1}
J. Hu and O. Muzician, \textit{Cross-ratio distortion and
Douady-Earle extension: I. A new upper bound on
quasiconformality}, Jour. of London Math. Soc. 86(2) (2012)
387-406.

\bibitem{HuMuzician2}
J. Hu and O. Muzician, \textit{Cross-ratio distortion and
Douady-Earle extension: II. Quasiconformality and asymptotic
conformality are local}, Jour. d'Analyse Math. 117(1) (2012)
249--271.

\bibitem{HuMuzician3}
J. Hu and O. Muzician, \textit{Cross-ratio distortion and
Douady-Earle extension: III. How the maximal dilatation of the
extension near the origin is controlled by the distortion of the
boundary map on finitely many points}, in preparation.

\bibitem{HuMuzician4}
J. Hu and O. Muzician, \textit{Conformally natural extensions of
continuous circle maps: I. The case when the pushforward measure
has no atom}, Contemp. Math. 575 (2012) 171-198.

\bibitem{HuMuzician5}
J. Hu and O. Muzician, \textit{Conformally natural extensions of
continuous circle maps: II. The general case}, preprint 2013.

\bibitem{HuPal}
J. Hu and S. Pal, \textit{Boundary differentiability of
Doudy-Earle extensions of diffeomorphisms of $\mathbb{S}^n$},
preprint of 2013, to appear in Pure and Applied Mathematics
Quarterly.

\bibitem{Markovic} V. Markovic,  \textit{Quasisymmetric groups},  J. Amer. Math. Soc. 19 (2006) 673--715.


\bibitem{MiyachiSaric}
H. Miyachi and D. \v{S}ari\'c, \textit{Uniform Weak$^\ast$
Topology and earthquake in the hyperbolic plane}, Proc. London
Math. Soc. 105 (2012) 1123-1148.

\bibitem{Penner} R.C Penner,  \textit{Universal constructions in Teichm$\mathrm{\ddot{u}}$ller theory},
Adv. in Math. 98 (1993) 143--215.

\bibitem{Saric1}
D. \v{S}ari\'c, \textit{Real and complex earthquakes}, Trans.
Amer. Math. Soc. 358 (2006) 233-249.

\bibitem{Saric2}  D. \v{S}ari\'c, \textit{Circle homeomorphisms and shears}, Geom. Topol. 14
(2010) 2405--2430.

\bibitem{Saric3}  D. \v{S}ari\'c, \textit{Zygmund vector fields, Hilbert transformation and Fourier
coeddicients in shear coodinates}, to appear in Amer. J. Math.

\bibitem{Thurston} W. Thurston, \textit{Earthquakes in two-dimensional hyperbolic
geometry}, In Low-dimensional topology and Kleinian groups, LMS.
Lecture Note Ser. 112, Cambridge Univ. Press, Cambridge (1986)
91--112.

\bibitem{wuchong} C. Wu, \textit{The cross-ratio distortion of integrably asymptotic affine
homeomorphism of unit circle},
 Sci. China Math. 55 (2012) 625--632.

\bibitem{wushengjian} S. Wu,  \textit{Moduli of quadrilaterals and extremal quasiconformal extensions
of quasisymmetric functions}, Comment. Math. Helv. 72 (1997)
593--604.





\end{thebibliography}

\end{document}